\newtheorem{thm}{Theorem}
\newtheorem{thm2}{Theorem}[section]
\newtheorem{lem}{Lemma}[section]
\newtheorem{cor}[lem]{Corollary}
\newtheorem{prop}[lem]{Proposition}
\theoremstyle{definition}
\newtheorem{defn}[lem]{Definition}
\newtheorem{rem}[lem]{Remark}
\renewcommand{\Re}{\mathbb R}
\newcommand{\Ren}{\Re^n}
\newcommand{\BB}{\mathbf B}
\renewcommand{\S}{\mathbb{S}}
\newcommand{\F}{\mathcal{F}}
\newcommand{\B}{\BB}
\newcommand{\remark}[1]{}
\DeclareMathOperator{\bd}{bd}
\DeclareMathOperator{\dist}{dist}
\DeclareMathOperator{\conv}{conv}
\DeclareMathOperator{\diam}{diam}
\DeclareMathOperator{\relint}{relint}
\newcommand\fact[2][]{
\ifx&#1& 
 \refstepcounter{equation}
\fi
\begin{minipage}{0.002\textwidth}
\ifx&#1& 
 (\theequation)
 \else 
 #1
\fi
\end{minipage}
\hspace{0.08\textwidth}
\begin{minipage}{0.85\textwidth}
\emph{#2}
\end{minipage}
}
\begin{document}
\title[On multiple Borsuk numbers]{On multiple Borsuk numbers in normed spaces}
\author[Z. L\'angi and M. Nasz\'odi]{Zsolt L\'angi and M\'arton Nasz\'odi}
\address{Zsolt L\'angi, Dept.\ of Geometry, Budapest University of
Technology and Economics, Budapest, Egry J\'ozsef u. 1., Hungary, 1111}
\email{zlangi@math.bme.hu}
\address{
M\'arton Nasz\'odi,
Dept. of Geometry,
Lorand E\"otv\"os University,
P\'azm\'any P\'eter S\'et\'any 1/C
Budapest, Hungary 1117
}
\email{marton.naszodi@math.elte.hu}
\keywords{Borsuk's problem, diameter, unique completion, covering, bodies of
constant width, complete, multiple chromatic number.}
\subjclass{52C17, 05C15, 52A21}
\thanks{The authors gratefully acknowledge the support of the J\'anos Bolyai Research Scholarship of the Hungarian Academy of Sciences, and
the Hung. Nat. Sci. Found. (OTKA) grants: K72537 and PD104744.}

\begin{abstract}
Hujter and L\'angi defined the \emph{$k$-fold Borsuk number} of a set $S$ in
Euclidean $n$-space of diameter $d > 0$ as the smallest cardinality of a family
$\F$ of subsets of $S$, of diameters strictly less than $d$, such that every
point of $S$ belongs to at least $k$ members of $\F$.

We investigate whether a $k$-fold Borsuk covering of a set $S$ in a finite dimensional real normed
space can be extended to a completion of $S$. Furthermore, we determine the
$k$-fold Borsuk number of sets in not angled normed planes, and give a partial characterization
for sets in angled planes.
\end{abstract}

\maketitle

\section{Introduction}

In 1933, Borsuk \cite{B33} posed the problem whether any set $S$ of diameter $d
> 0$ in Euclidean $n$-space $\Re^n$ is the union of $n+1$ sets of diameters
less than $d$. A proof of the affirmative answer for $n=2$ appeared in
\cite{B33}, and for $n=3$ in \cite{E55} (for finite $S$, see \cite{G57_2},
\cite{H57}). Sixty
years after the problem appeared, Kahn and Kalai \cite{K93} proved that for
large values of $n$ the answer is negative. For surveys on Borsuk's problem,
see \cite{BMS97, Ra08}.

Boltyanski \cite{B70} gave a characterization of bounded sets according to
their \emph{Borsuk number} (that is, the least number of smaller diameter
pieces that they can be partitioned into) in the Euclidean plane: \emph{Let
$\emptyset\neq S\subset\Re^2$ be a bounded set that is not a singleton. Then the
Borsuk number of $S$ is 3 if $S$ has a unique completion (see
Definition~\ref{defn:completion}) and 2 otherwise.}

Gr\"unbaum \cite{G57} was the first to consider the Borsuk numbers of
sets with respect to a metric distinct from the Euclidean, and determined the
Borsuk numbers of sets in the plane equipped with the $\ell_{\infty}$ norm. The
problem was solved for arbitrary normed planes in \cite{BS77}:

\begin{thm2}[Boltyanski-Soltan]\label{thm:normedBorsuk}
  Let $S$ be a compact set in the normed plane with unit ball $\B$.
  Then the Borsuk number of $S$ is
  \begin{itemize}
  \item $a(S) = 4$ if, and only if, $\B$ and $S$ are homothetic parallelograms;
  \item $a(S) = 3$ if, and only if, $a(S) \neq 4$, there is a unique
  completion $C$ of $S$ with respect to $\B$, and $S$ satisfies the \emph{supporting line property}\label{supplineprop}: for any pair of parallel supporting
lines of $C$, $S$ has a point on at least one;
  \item $a(S) = 2$ otherwise.
  \end{itemize}
\end{thm2}

As a generalization of Borsuk's problem, Hujter and L\'angi \cite{HL13}
defined the \emph{$k$-fold Borsuk number}, $a_k(S)$, of a set $S$ of
diameter $d > 0$ as the smallest cardinality of a family
$\F$ of subsets of $S$, of diameter strictly less than $d$, such that every
point of $S$ belongs to at least $k$ members of $\F$. Among other results, they
determined the $k$-fold Borsuk numbers of any set in the Euclidean plane.

Motivated by Boltyanski's result, we investigate whether a ($k$-fold) Borsuk
covering of a set $S$ can be extended to a completion of $S$.
Theorem~\ref{thm:normedextensionSC} states that such an extension is possible
in certain \emph{Minkowski spaces} (ie. finite dimensional real normed spaces)
provided that $S$ has a unique completion. The
class of these Minkowski spaces include Euclidean $n$-space for all $n$. This
result has been known in the Euclidean plane \cite{B70} but is new in higher
dimensional Euclidean spaces. In Theorem~\ref{thm:completionNotAngled}, we
extend this result to not angled Minkowski planes (see
Definition~\ref{defn:notangled}).

In Theorems~\ref{thm:notReuleaux}, \ref{thm:Reuleaux} and
\ref{thm:angled}, we find the $k$-fold Borsuk numbers of sets in not angled normed planes,
and of sets that cannot be completed uniquely to a Reuleaux polygon in angled planes.

\section{Definitions and notations}

We denote the closed unit ball centered at a point $x\in\Ren$ of a Minkowski
space by $\B(x)$, and its boundary, the unit sphere by $\S(x)$. For a set $A$,
the intersection of unit balls centered at the points of $A$ is denoted as
\[
 \B A=\bigcap_{x\in A} \B(x).
\]

\begin{defn}\label{defn:completion}
A bounded set $C$ in an $n$-dimensional Minkowski space is \emph{complete}, if
no set
of the same diameter properly contains $C$. (Note that a complete set is
clearly compact and convex.)
A set $S$ is a \emph{set of unique
completion} if there is a unique complete set $C$ containing $S$ of the same
diameter as $S$.
\end{defn}

\begin{prop}\label{prop:udm}
Let $S$ be a set of unit diameter in an $n$-dimensional Minkowski space. Then
 \begin{itemize}
  \item $S$ is complete if, and only if $S=\B S$,
  \item $S$ is a set of unique completion if, and only if, $\B S=\B^2 S$ ie.
$\B
S$ is complete.
 \end{itemize}
\end{prop}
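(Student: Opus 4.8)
The plan is to derive both equivalences from a few elementary properties of the operator $A\mapsto\B A$, using only that complete sets are compact and convex (as recorded in Definition~\ref{defn:completion}) together with the standard fact that every set of diameter $1$ is contained in a maximal one, i.e.\ in a complete set of the same diameter (the union of a chain of diameter-$1$ sets has diameter $1$, so this follows from Zorn's lemma). First I would record the preliminaries. Since $\B A$ is an intersection of closed balls it is compact and convex; since $\B A$ is precisely the set of points within distance $1$ of every point of $A$, one has $A\subseteq\B A$ if and only if $\diam A\le 1$ (in particular $S\subseteq\B S$, hence $\diam\B S\ge 1$, and $\diam\B S\le 1$ is equivalent to $\B S\subseteq\B^2 S$); and $\B$ reverses inclusions, so $S\subseteq\B S$ gives $\B^2 S\subseteq\B S$. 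Combining the last two, $\diam\B S=1$ is equivalent to $\B S=\B^2 S$.

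For the first bullet I would argue both directions directly. If $S=\B S$ and $T\supsetneq S$ has $\diam T=1$, then a point $y\in T\setminus S$ lies within distance $1$ of all of $S$, so $y\in\B S=S$ --- a contradiction; hence $S$ is complete. Conversely, if $S$ is complete there can be no $y\in\B S\setminus S$, since $S\cup\{y\}$ would be a diameter-$1$ set properly containing $S$; as $S\subseteq\B S$, this gives $S=\B S$.

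For the second bullet, suppose first that $\B S=\B^2 S$. By the preliminaries $\diam\B S=1$, so $\B S$ is a compact convex diameter-$1$ set containing $S$, and it is complete by the first bullet applied to $\B S$ (as $\B(\B S)=\B^2 S=\B S$); thus $\B S$ is a completion of $S$. Any completion $C$ of $S$ satisfies $C=\B C$ and $S\subseteq C$, whence $C=\B C\subseteq\B S$, and since $\diam C=1=\diam\B S$ while $C$ is maximal we get $C=\B S$ --- so the completion is unique. Conversely, suppose $\B S\ne\B^2 S$; since $\B^2 S\subseteq\B S$ always, the preliminaries give $\diam\B S>1$, so there are $p,q\in\B S$ with $\|p-q\|>1$. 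Since $p,q\in\B S$, both $S\cup\{p\}$ and $S\cup\{q\}$ have diameter $1$; extend them to completions $C_p$ and $C_q$ of $S$ respectively. These two completions must be distinct, since a common one would be a diameter-$1$ set containing both $p$ and $q$. Thus $S$ is not of unique completion.

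The whole argument is elementary, so I do not expect a real obstacle; the one point to handle with care is that $\B S$ need not have diameter $1$, and the observation that makes the second bullet work in both directions is precisely that the gap $\B^2 S\subsetneq\B S$ is what obstructs uniqueness of the completion. I would also want to check that the preliminary identities are valid for an arbitrary diameter-$1$ set $S$ (not assumed closed or convex), so that nothing is lost in passing from $S$ to $\B S$.
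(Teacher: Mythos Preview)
Your argument is correct and self-contained. Note, however, that the paper does not actually prove this proposition: it simply attributes the first bullet to Eggleston \cite{E65} (the \emph{spherical intersection property}) and the second to Moreno \cite{M11}, adding only the remark that in the unique-completion case the completion is $\B S$. So there is no ``paper's own proof'' to compare against.

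What you have written is therefore a genuine addition: an elementary derivation of both equivalences from the order-reversing property of $A\mapsto\B A$ and Zorn's lemma for the existence of completions. The key observation that drives your second bullet --- that $\B^2 S\subseteq\B S$ always holds and that equality is equivalent to $\diam\B S\le 1$ --- is exactly the right one, and your two-completions construction from a pair $p,q\in\B S$ at distance greater than one is the natural way to exhibit non-uniqueness. Your proof is short enough that it could reasonably replace the citations, or be included as a remark for completeness; the only thing the cited references buy is brevity in the present paper.
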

The first statement is due to Eggleston \cite{E65}, where it is called the
\emph{spherical intersection property}, the second is due to Moreno (Corollary~3
in \cite{M11}). Note that in the second case the completion of $S$ is $\B S$.

We define the \emph{distance of a set} $A$ of a Minkowski space and a
point $x$ as $d_{\B}(x,A)=\inf\{d_{\B}(x,a) : a\in A\}$,
where $d_{\B}(x,a)$ is the distance of the points $a$ and $x$ in the normed space with unit ball $\B$.

\section{Extending a Borsuk covering in certain Minkowski
spaces}\label{sec:extension}

Our goal is to extend a Borsuk covering of a closed set $S$ of unique
completion in a Minkowski space to its unique completion $\B S$. In
general, a Borsuk covering of a compact set may not extend to any of its
completions: consider a pair of points which in Euclidean space have many
completions, all of whose Borsuk number is above two.

\subsection{Extension of a Borsuk covering in certain Minkowski spaces}
We define the following ``Lens Cutting Condition'' which holds in certain
Minkowski spaces:

\medskip

\fact[(LCC)]{For any two distinct points $u$ and $v$ in $\Ren$ with
$d_{\B}(u,v)\leq
1$ and
$x\in\S(u)\cap\S(v)$ and $\varepsilon>0$, there is a $w\in\Ren$ such that
$x\notin \B(w)$ but $\B(w)\supset\B(u)\cap\B(w)\setminus\varepsilon\B(x)$.
}

\begin{rem}
 It is not hard to see that (LCC) holds in all Euclidean spaces.
\end{rem}

\begin{thm}\label{thm:normedextensionSC}
 If (LCC) holds in a Minkowski space then any $k$-fold Borsuk covering of a
closed set of unique completion extends to a $k$-fold Borsuk covering of its
completion.
\end{thm}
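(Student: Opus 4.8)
The plan is to start with a $k$-fold Borsuk covering $\F = \{F_1,\dots,F_m\}$ of the closed set $S$ of unit diameter (WLOG $d=1$), so each $F_i \subset S$ has $\diam F_i < 1$ and every point of $S$ lies in at least $k$ of the $F_i$. We want to enlarge each $F_i$ to a set $\tilde F_i \subset \B S$ with $\diam \tilde F_i < 1$ so that the enlarged family still covers every point of the completion $\B S$ with multiplicity $k$. First I would reduce to the case where each $F_i$ is closed (replace it by its closure; since $\diam F_i < 1$, we still have $\diam \cl F_i < 1$, and the covering property is preserved). The natural candidate for $\tilde F_i$ is the intersection with $\B S$ of a slightly shrunk ball of the right diameter: if $\diam F_i = d_i < 1$, then $F_i \subset \B(a_i) \cap \B S$ would be ideal, but in general there is no single center $a_i$ working. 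Instead I would cover each $F_i$ by finitely many small balls and handle each piece, or — cleaner — use the following: for each $F_i$, choose $\delta_i > 0$ with $d_i + 2\delta_i < 1$, and set $\tilde F_i = \{x \in \B S : d_\B(x, F_i) \le \delta_i\}$, the $\delta_i$-neighborhood of $F_i$ intersected with $\B S$. Then $\diam \tilde F_i \le d_i + 2\delta_i < 1$, and $\tilde F_i$ is a closed subset of $\B S$ containing $F_i$.

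The core difficulty is that the enlarged sets $\tilde F_i$ still only cover $S$ with multiplicity $k$, not all of $\B S$; the points of $\B S \setminus S$ may be covered fewer than $k$ times, or not at all. This is exactly where (LCC) enters, and I expect this to be the main obstacle. The idea is: a point $y \in \B S \setminus S$ lies in $\B S = \B^2 S$ (using Proposition~\ref{prop:udm} and unique completeness), so for every $a \in \B S$ we have $d_\B(y,a) \le 1$; in particular $y$ is "close to" $S$ in the sense that it is within distance $1$ of every point of $S$. Since $S$ is closed and $\B S$ is compact, and every point of $\B S$ is within unit distance of the covering structure, I would argue that for each $y \in \B S$ there is a point $s(y) \in S$ with $d_\B(y, s(y))$ small — this needs care: it is false in general that $\B S \setminus S$ is a small neighborhood of $S$. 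So instead the argument must be: cover $\B S$ by finitely many balls $\B(c_j, \rho)$ with $\rho$ tiny; for each such ball pick a representative point; and then use (LCC) in the following shape — if a small lens $\B(u) \cap \B(v)$ near a diameter-realizing pair can be "absorbed" into a new unit ball missing a prescribed point $x$, one can cut each $F_i$ into slightly smaller pieces and translate/adjust them to cover the extra region. Concretely, I would use (LCC) to replace each $F_i$ by a set of the form $\B(w) \cap (\text{something})$ that still has diameter $< 1$ but whose union over $i$ now covers a full $\varepsilon$-fattening of $S$ inside $\B S$; then a compactness argument shows finitely many such adjustments suffice to cover all of $\B S$ with multiplicity $k$.

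The key steps, in order, would be: (1) normalize $\diam S = 1$ and replace the $F_i$ by their closures; (2) for each $i$, write $F_i$ as contained in a finite union of "lens-type" sets $\B(u)\cap\B(v)$ with $d_\B(u,v)\le 1$ coming from diameter-realizing pairs, or more simply bound $\diam F_i = d_i < 1$ and fix slack $\delta_i$; (3) apply (LCC), with $x$ ranging over a suitable finite set and $\varepsilon$ chosen uniformly small, to produce for each $i$ a replacement $\tilde F_i \subset \B S$ with $\diam \tilde F_i < 1$ and $F_i \subset \tilde F_i$, such that $\bigcup_i \tilde F_i$ contains a fixed neighborhood of $S$ relative to $\B S$; (4) invoke compactness of $\B S$ together with the identity $\B S = \B^2 S$ to see that $\B S$ is in fact covered: any $y \in \B S$ is a limit of / adjacent to points handled in step (3), and the multiplicity-$k$ property passes to $\tilde F_i$ since $F_i \subset \tilde F_i$; (5) finally check that $\diam \tilde F_i < 1$ is a \emph{strict} inequality uniformly, which holds because there are finitely many sets and each strict inequality is preserved under the small perturbations. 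The main obstacle is step (3)–(4): making precise how (LCC) lets one enlarge the pieces to cover $\B S \setminus S$ without destroying the strict diameter bound, and in particular confirming that finitely many applications of (LCC) suffice — this is where compactness of $S$ and $\B S$, and the fact that only the boundary behavior near diameter-realizing pairs matters, must be combined carefully.
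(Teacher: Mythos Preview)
Your proposal has a genuine gap at the crucial point. You correctly observe that the $\delta$-neighborhoods $\tilde F_i$ need not cover $\B S\setminus S$ (indeed $\B S\setminus S$ can lie far from $S$), and you then say you will ``use (LCC)'' to enlarge the pieces further---but you never say \emph{how}, and in fact your description of what (LCC) should do (absorb lens-shaped regions, translate/adjust pieces, cover an $\varepsilon$-fattening of $S$) is not what (LCC) actually provides. Steps (3)--(4), which you flag yourself as ``the main obstacle,'' are not a plan but a hope.

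The missing idea is this: (LCC) is not used to modify the covering sets at all. It is used to prove a purely geometric fact about the completion, namely that every point $x\in(\B S)\setminus S$ has \emph{at most one} point of $\B S$ at distance exactly~$1$ from it. (Sketch: if $u,v\in\B S$ are two such points, then $S\subset\B^2 S\subset\B(u)\cap\B(v)$; since $x\notin S$ and $S$ is closed, choose $\varepsilon>0$ with $\varepsilon\B(x)\cap S=\emptyset$ and apply (LCC) to get $w$ with $S\subset\B(w)$ but $x\notin\B(w)$, contradicting $x\in\B^2 S=\B S$.) Once you have this uniqueness, the extension is almost trivial: reduce to covering $\bd\B S$ (a Borsuk covering of the boundary extends to the body), and assign each $x\in\bd\B S$ to the piece $Q_i$ nearest to it. Two points of the extended piece $Q_i'$ cannot be at distance~$1$: if both lie in $S$ they lie in $Q_i$; if both lie outside $S$ then each has its unique antipode in $S$, so they are not antipodal to each other. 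No compactness iteration, no finitely-many-applications argument, no fattening is needed beyond this.

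So the fix is to abandon the neighborhood-enlargement strategy entirely and instead prove the ``unique antipode'' lemma from (LCC); everything then follows in a few lines.
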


\begin{proof}
We prove the Theorem for $k=1$, the general case follows from the same
argument. Let $S=Q_1\cup\ldots\cup Q_k$ be a Borsuk covering of a closed set
$S$ of unique completion by closed sets of diameter at most $r<1$.
Note that a Borsuk covering of the boundary of a set may be extended to the set in a straightforward way (cf. also Remark~\ref{rem:boundary}).
Thus, we will define sets $Q^{\prime}_1\cup\ldots\cup Q^{\prime}_m=\bd \B S$ that form a Borsuk
covering of the boundary of the completion $\B S$ of $S$.

For all $i$, $Q^{\prime}_i$ will contain $Q_i\cap \bd \B S$ and some more points of $\bd \B
S$. For an $x\in(\bd \B S)$ we take the index $i$ such that $d(x,Q_i)$ is
minimal (if it is not unique, we take all such $i$), and include $x$ into
$Q^{\prime}_i$. Clearly, $Q^{\prime}_i$ is closed.

Note that for any $x\in \B S\setminus S$ we have that

\fact[(*)]{there are no two distinct points $u, v\in \B S$ with
$d_{\B}(x,u)=d_{\B}(x,v)=1$.
}

Suppose the contrary. Then $S\subseteq \B^2 S\subseteq \B(u)\cap \B(v)$. On
the other hand, $\B^2 S$ is the intersection of all unit balls that contain $S$,
and hence by (LCC), $\B^2 S\subseteq (\B(u)\cap \B(v))\setminus \{x\}$,
contradicting $x\in \B S=\B^2 S$.

The family of the sets $q_i'$ is a Borsuk partition of $\bd \B S$. Indeed, let $x,y\in Q^{\prime}_i$. If
$x$ or $y$ is in $S$ then clearly, $d(x,y)<1$. If both are in
$Q_i^{\prime}\setminus S$ then, by (*), $d(x,y)<1$.
\end{proof}

\subsection{Extension of a Borsuk covering in certain Minkowski planes}
It is not difficult to see that a strictly convex normed plane (that is, when
the unit disk $\B$ is strictly convex) satisfies (LCC), and thus has the
extension property of Theorem~\ref{thm:normedextensionSC}. Next, we consider a
class of Minkowski planes that is wider than the class of strictly convex
planes, and where (LCC) does not hold, but the extension property still does.
The following definition is from \cite{BMS97} (cf. also \cite{BS77}).

\begin{defn}\label{defn:notangled}
A normed plane with unit ball $\B$ is \emph{angled}, if for some non-collinear
points $a,b,c$, we have $[a,b] \cup [b,c] \subset \S$.
\end{defn}

\begin{thm}\label{thm:completionNotAngled}
Let $S$ be a set of unique completion in a not angled normed plane, and let $C$
be the completion of $S$. Then any $k$-fold Borsuk covering $\F$ of $S$ can be
extended to a $k$-fold Borsuk covering of $C$.
\end{thm}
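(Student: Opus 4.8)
The plan is to reduce Theorem~\ref{thm:completionNotAngled} to the argument used in the proof of Theorem~\ref{thm:normedextensionSC}, isolating exactly where the hypothesis ``(LCC)'' was used and showing that in a not angled plane the weaker conclusion we actually need still holds. As in that proof, it suffices to extend the covering to the boundary $\bd C$ of the completion (a covering of the boundary extends to the whole body in a routine way, cf.\ Remark~\ref{rem:boundary}), and the natural candidate is again: for $x\in\bd C$ assign $x$ to $Q_i'$ for every index $i$ minimising $d_{\B}(x,Q_i)$. The sets $Q_i'$ are closed, they cover $\bd C$, and each point still lies in at least $k$ of them, so the only thing to verify is that $\diam Q_i' < d$. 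After rescaling to $d=1$, this comes down to the same claim marked (*): for every $x\in C\setminus S$ (equivalently, every boundary point of $C$ that is not forced to be a diametral endpoint) there are \emph{not} two distinct points $u,v\in C$ with $d_{\B}(x,u)=d_{\B}(x,v)=1$.

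First I would recall, via Proposition~\ref{prop:udm}, that $C=\B S=\B^2 S$, so $C$ is the intersection of all unit balls containing $S$. If $x\in C$ had two unit-distance partners $u,v\in C\subseteq\B(u)\cap\B(v)$, then $S\subseteq\B(u)\cap\B(v)$, and $x$ would lie on $\S(u)\cap\S(v)$. The point $x$ lies in the ``lens'' $\B(u)\cap\B(v)$, on its boundary, at a vertex of the two arcs $\S(u)$ and $\S(v)$. In the (LCC) world one shaves off a neighbourhood of $x$ with a third unit ball still containing $S$; here instead I would argue directly about the local geometry of that lens. The key step is: since the plane is \emph{not} angled, the unit sphere $\S$ contains no two adjacent maximal segments meeting at a non-collinear corner, so at the point $x$ the boundary $\bd(\B(u)\cap\B(v))$ cannot ``turn a corner into the interior'' in a way that keeps $x$ extreme — more precisely, either $\S(u)$ and $\S(v)$ cross transversally at $x$ (so one of $\B(u),\B(v)$ cuts strictly inside the other near $x$, and $x$ is not on the boundary of the intersection from that side), or they are tangent along a common segment through $x$, in which case $x$ is in the relative interior of a segment of $\bd C$ and we may still find a supporting unit ball avoiding $x$. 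In the angled case this fails exactly at a corner formed by two $\S$-segments, which is why the theorem is stated only for not angled planes.

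Concretely, I would set $x$ as the origin, let $p=x-u$ and $q=x-v$ be the two unit vectors (so $-p\in\S$, $-q\in\S$, and $x$ is the endpoint of the radial segments from $u$ and from $v$). The lens $\B(u)\cap\B(v)$ near $x$ is comparable to $(\B+u)\cap(\B+v)$, and $x$ being an exposed point of it forces the two boundary arcs of $\S$ near $-p$ and near $-q$ to bound a pointed cone at $x$; pulling this back, $\S$ would contain, near the relevant points, two segments whose directions are the edges of that cone, and glueing the local pictures produces the forbidden non-collinear pair $[a,b]\cup[b,c]\subset\S$, contradicting not angledness. Once (*) is re-established, the remainder is verbatim the proof of Theorem~\ref{thm:normedextensionSC}: for $x,y\in Q_i'$, if one of them lies in $S$ then $d_{\B}(x,y)<1$ since $Q_i$ had diameter $<1$ and $d(x,Q_i)=d(y,Q_i)=0$; if both lie in $Q_i'\setminus S$ then $d_{\B}(x,y)<1$ by (*).

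The main obstacle is the geometric claim in the previous paragraph: turning ``not angled'' into the statement that a common boundary vertex of two translated unit balls at unit distance cannot be an exposed point of their intersection unless it is forced by $S$ itself. This requires a careful case analysis of how $\S$ can look near the two antipodal-type points $-p$ and $-q$ — segment vs.\ strictly convex vs.\ corner — and ruling out the bad configuration by exhibiting two adjacent $\S$-segments; handling the case $d_{\B}(u,v)=1$ (where $x$ may also lie on $\S(u)\cap\S(v)$ for a genuinely ``tight'' lens) versus $d_{\B}(u,v)<1$ separately is the delicate part. I expect everything else — the reduction to the boundary, closedness of the $Q_i'$, the multiplicity count, and the diameter estimate given (*) — to be straightforward adaptations of the earlier proof.
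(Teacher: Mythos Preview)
Your reduction to claim (*) is the natural first attempt, but (*) is \emph{false} in not angled planes as soon as the unit circle contains a flat segment. Take $\B$ to be a Euclidean disk with two small opposite caps shaved off, so that $\bd\B$ consists of a short top segment, a short bottom segment, and two strictly convex circular arcs; this plane is not angled. Let $C=\tfrac12\B$, a complete set of diameter one, and let $S$ be $\bd C$ with the relatively open top segment removed. Then $S$ is compact, and since the two endpoints of the top segment lie in $S$, convexity of unit balls gives $\B(s_1)\cap\B(s_2)\subseteq\B(c)$ for every $c$ on that segment; hence $\B S=\B(\bd C)=C$, so $S$ has unique completion $C$. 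Now the midpoint $x$ of the top segment lies in $(\bd C)\setminus S$, yet every point of the bottom segment of $C$ is at $\B$-distance exactly $1$ from $x$. Thus $x$ has infinitely many diametral opposites in $C$, and (*) fails.

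The place your geometric argument breaks is precisely the tangent case: when $\S(u)$ and $\S(v)$ share a straight segment through $x$, you assert one can ``still find a supporting unit ball avoiding $x$''. But if both endpoints of that segment lie in $S$ (as above), then every unit ball containing $S$ contains the whole segment by convexity, hence contains $x$; no such ball exists. Not angledness forbids two \emph{adjacent} segments on $\S$; it does not forbid a single segment, and one segment already kills (*). (Incidentally, your justification of the mixed case ``$x\in S$, $y\notin S$'' via $d(x,Q_i)=d(y,Q_i)=0$ is also off: $d(y,Q_i)$ need not vanish.)

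The paper does not try to rescue (*). Instead it $\varepsilon$-fattens each $Q_i$ to $Q_i^*$, then uses Lemma~\ref{lem:completionarc} to write $(\bd C)\setminus S$ as finitely many open unit-radius arcs whose endpoints $q,r$ and center $p$ all lie in $S$. On an arc where $p$ is the unique opposite of every point, the nearest-$Q_i$ rule works as in Theorem~\ref{thm:normedextensionSC}. If some $x$ on the arc has a second opposite $p'$, Lemma~\ref{lem:disjointdiam} forces $[x,r]$ (and possibly $[q,y]$) to be straight segments in $\bd C$, and these segments are simply appended to a $Q_i^*$ containing the corresponding endpoint $r$ or $q$. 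The ``not angled'' hypothesis enters only in the last subcase, to rule out a configuration that would put two non-collinear segments into $\S(p)$.
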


From this point on throughout this section, we assume that the Minkowski plane
we work with is not angled.

The following monotonicity lemma appeared in \cite{L04}.

\begin{lem}[Lassak]\label{lem:monotonicity}
Let $t \mapsto p(t)$ (with $t \in [0,1]$) be a simple, closed, continuous
curve, defining the boundary of a complete body of diameter one in a Minkowski
plane. Let $p=p(0)$, and let $t_1$ and $t_2$ be the smallest and the largest
values of $t$ such that $\dist_\B(p,p(t))=2$. Then the function $t \mapsto
\dist_\B(p,p(t))$ is
\begin{itemize}
\item strictly increasing on the interval $[0,t_1]$,
\item equal to one on $[t_1,t_2]$, and
\item strictly decreasing on $[t_2,1]$.
\end{itemize}
\end{lem}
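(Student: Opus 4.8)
The plan is to reduce the statement about a complete body of diameter one to elementary facts about the completion and about the boundary structure of the body, so I would begin by recalling, as in \autoref{prop:udm} and Eggleston's criterion, that a complete body $B$ of diameter one satisfies $B = \B B$, so that for every $p \in \bd B$ there is at least one point $q \in \bd B$ with $\dist_\B(p,q)=1$, and $q \in \bd\B(p)$. The set of such ``diametrically opposite'' points of $p$ is the arc $\{p(t) : t \in [t_1,t_2]\}$ claimed in the lemma; the first task is to see this set is exactly a (possibly degenerate) subarc $A_p = \bd B \cap \bd \B(p)$. Since $B \subset \B(p)$ and both are convex bodies, $A_p = \bd B \cap \bd \B(p)$ is a connected arc (an intersection of the boundaries of two nested convex bodies in the plane is connected), so it has the form $\{p(t): t_1 \le t \le t_2\}$ for some $0 \le t_1 \le t_2 \le 1$; this justifies the ``equal to one on $[t_1,t_2]$'' clause and also shows $\dist_\B(p,p(t)) < 1$ for $t \notin [t_1,t_2]$.

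The substance of the lemma is the strict monotonicity on $[0,t_1]$ and $[t_2,1]$. I would argue on $[0,t_1]$ (the other interval is symmetric by reversing orientation). Suppose $0 \le s < s' \le t_1$ but $\dist_\B(p,p(s)) \ge \dist_\B(p,p(s'))$; I want a contradiction. The key geometric input is that $B$ is complete, hence a body of constant width one, which in a not-angled plane (this is where the standing assumption is used) forces enough strict convexity at the relevant places: a complete body in a not-angled plane cannot contain two ``opposite'' boundary segments, so the only line segments in $\bd B$ are those forced locally, and in particular the completion is ``smooth enough'' that the distance function $t \mapsto \dist_\B(p,p(t))$ cannot have an interior local maximum strictly less than one on $[0,t_1)$. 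Concretely, if the function failed to be strictly increasing, it would attain a value $\rho < 1$ at two distinct parameters or would be locally constant $= \rho < 1$ on a subinterval; in the first case one finds a point $p(s)$ that is not on the boundary of $\B(p)$ but from which one can push the boundary outward while keeping diameter one (contradicting completeness via $B = \B B$), and in the second case a subarc of $\bd B$ lies on $\bd \B(p) \cdot \rho$ — i.e., on a scaled copy of $\bd \B$ — which, combined with a corresponding opposite subarc forced by constant width, produces a pair of homothetic boundary arcs that in a not-angled plane must actually be segments, and then two opposite segments, contradiction.

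So the proof has three pieces: (i) $\bd B \cap \bd \B(p)$ is a single arc $[t_1,t_2]$, which is the ``constant $= 1$'' part and is pure convex geometry; (ii) for $t$ outside $[t_1,t_2]$ the distance is $< 1$, immediate from (i); (iii) strict monotonicity off $[t_1,t_2]$, which I would get by combining completeness ($B=\B B$, so every boundary point has an opposite at distance one — hence the distance function ``wants'' to climb back to one) with the not-angled hypothesis to rule out plateaus below one and rule out the two-point coincidence. The main obstacle is step (iii): making precise ``the distance function has no interior local extremum $< 1$'' without a differentiability assumption on $\B$, and correctly invoking the not-angled condition to exclude the degenerate (segment) cases. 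I expect the cleanest route is to phrase it via supporting lines: at a candidate local maximum $p(s)$ with value $\rho<1$, both $\B(p(s))$-boundary behavior and the support line of $B$ at $p(s)$ constrain $p$ to lie in a position forbidden by $B$ being complete of diameter one, and when $\B$ has segments on its sphere the not-angled hypothesis is exactly what prevents these constraints from being simultaneously satisfiable by a genuine complete body. Once (iii) is in hand, combining with (i) and (ii) gives the three-part description verbatim, and one records that $t_1$ (resp. $t_2$) is indeed the smallest (resp. largest) parameter with distance $2$ — here I read ``$=2$'' as the intended ``$=1$'', matching the diameter-one normalization — completing the proof.
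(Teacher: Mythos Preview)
The paper does not give its own proof of this lemma: it is quoted from Lassak \cite{L04} and stated without argument, so there is nothing in the paper to compare your proposal against. Any comparison would have to be with Lassak's original proof, which the paper does not reproduce.

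That said, your sketch has two real problems. First, your step (i) asserts that for nested planar convex bodies $B\subseteq\B(p)$ the set $\bd B\cap\bd\B(p)$ is automatically connected; this is false in general (think of a square inscribed in a rotated square, touching at four isolated points), so you cannot get the ``single arc'' conclusion from pure convexity alone. In fact the paper derives the connectedness of the diametrically-opposite set as a \emph{corollary} of Lassak's monotonicity lemma, not as an input to it, so your order of dependence is inverted. Second, you lean on the not-angled hypothesis for step (iii), but Lassak's lemma is stated and used (e.g.\ in the proof of Theorem~\ref{thm:notReuleaux}) for arbitrary Minkowski planes; the standing not-angled assumption in Section~\ref{sec:extension} is for the surrounding results, not for this imported lemma. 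A correct proof of strict monotonicity must work without that restriction, and your handling of the ``plateau at level $\rho<1$'' and ``two-point coincidence'' cases is too vague to constitute an argument even with the extra hypothesis. You are right, incidentally, that the ``$=2$'' in the statement is a typo for ``$=1$''.
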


\begin{cor}
Let $C$ be a complete body of diameter one in a Minkowski plane. Then, for any
$p \in \bd C$ we have the following.
\begin{itemize}
\item The set of points of $C$ at unit distance from $p$ is a connected arc of
$\S(p) \cap \bd C$.
\item If $||q-p||_{\B} = ||r-p||_{\B}$ for some $q,r \in \bd C$, then the arc
of $\bd
C$, connecting $q$ and $r$ and not containing $p$, belongs to the
circle $\S(p)$.
\end{itemize}
\end{cor}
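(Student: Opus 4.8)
The plan is to derive both bullet points of the Corollary directly from Lassak's monotonicity lemma (Lemma~\ref{lem:monotonicity}), applied to a parametrization of $\bd C$. The only subtlety is that Lemma~\ref{lem:monotonicity} is stated for the distance function $t\mapsto \dist_\B(p,p(t))$ taking values up to $2$, whereas here $C$ has diameter one, so all these distances are at most $1$. Reading the lemma in that regime, the values $t_1,t_2$ are (the endpoints of) the maximal subinterval on which the distance equals $1$, the function increases strictly from $0$ up to that plateau, stays equal to $1$ on $[t_1,t_2]$, and decreases strictly afterward. I would first note this reinterpretation explicitly so that the reader sees why the diameter-one hypothesis is exactly what makes ``distance $=2$'' in Lassak's formulation the same as ``distance $=$ diameter $=1$'' here.

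For the first bullet, fix $p\in\bd C$, parametrize $\bd C$ by a simple closed continuous curve $t\mapsto p(t)$ with $p=p(0)$, and consider $f(t)=\dist_\B(p,p(t))$. The set of points of $C$ at unit distance from $p$ is the set of points of $\bd C$ at unit distance from $p$ (a point of $\inter C$ cannot be at distance equal to the diameter, since $p\in C$ and one could move slightly away from $p$ inside $C$), i.e.\ $\{p(t): t\in[t_1,t_2]\}$. By the lemma this is the image of a subinterval under a continuous injective-on-each-side curve, hence a connected arc of $\bd C$; and since $f\equiv 1$ there, it lies in $\S(p)$, so it is an arc of $\S(p)\cap\bd C$. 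Strict monotonicity on $[0,t_1]$ and $[t_2,1]$ guarantees that no point outside $[t_1,t_2]$ is at unit distance, so the arc is exactly the unit-distance set.

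For the second bullet, suppose $q=p(s)$, $r=p(s')$ with $\|q-p\|_\B=\|r-p\|_\B=:\rho$ and, without loss of generality, $0\le s\le s'\le 1$. If $\rho=1$ then $s,s'\in[t_1,t_2]$ by the first part, and the arc from $q$ to $r$ avoiding $p$ is the subarc $\{p(t):t\in[s,s']\}\subseteq[t_1,t_2]$, on which $f\equiv 1$, so this arc lies on $\S(p)$. If $\rho<1$: by strict monotonicity, $f$ takes the value $\rho$ exactly once on $[0,t_1]$, say at $s$, and exactly once on $[t_2,1]$, say at $s'$; so $s\le t_1\le t_2\le s'$, and the arc of $\bd C$ from $q$ to $r$ not containing $p$ is $\{p(t):t\in[s,s']\}$, which contains the plateau $[t_1,t_2]$. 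Now I would invoke the standard fact (used, e.g., implicitly in Lassak's proof and in the theory of complete bodies) that if $f$ is strictly monotone on $[s,t_1]$ reaching the diameter at $t_1$, then in fact $p(t_1)$ together with $p$ spans a diametral chord and the portion of $\bd C$ between $p(s)$ and $p(t_1)$ is forced onto $\S(p)$ — more carefully, any two points of $\bd C$ at the same distance from $p$ with $p$ ``between'' them along $\bd C$ must, by completeness, have the connecting arc on $\S(p)$; this is exactly the content one extracts by applying the first bullet to each diametral direction. The cleanest route is: the arc from $q$ to $r$ not through $p$ contains $\{p(t):t\in[t_1,t_2]\}\subseteq\S(p)$, and on $[s,t_1]$ and $[t_2,s']$ one reruns the monotonicity argument with $p$ replaced by a suitable point witnessing that $q,r$ see equal distances, concluding the whole arc is on $\S(p)$.

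The main obstacle will be this last step: promoting ``equal distance from $p$'' on the strictly monotone parts to ``lies on $\S(p)$.'' Strict monotonicity alone does \emph{not} force points onto a circular arc; what does the work is completeness of $C$ (equivalently the spherical intersection property $C=\B C$ from Proposition~\ref{prop:udm}), which pins the boundary between two equidistant points onto $\S(p)$ whenever that portion of the boundary ``wraps around'' the far side. I expect to handle it by noting that the diametral chords from $p$ sweep out, via the first bullet applied at each relevant point, a family of unit circles whose arcs cover the arc $qr$; alternatively, by a direct argument that any boundary point $z$ on the arc from $q$ to $r$ not containing $p$ with $\|z-p\|_\B<1$ would, together with the plateau, violate the strict-monotonicity structure of $f$ as seen from $p$. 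Either way the key input is completeness of $C$, not merely convexity, and flagging that is where the care is needed.
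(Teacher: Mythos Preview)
Your treatment of the first bullet, and of the case $\rho=1$ in the second bullet, is correct and is exactly what the paper intends: the Corollary is stated without proof as an immediate consequence of Lassak's Lemma~\ref{lem:monotonicity}, and your parametrization argument simply makes that explicit.

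The genuine gap is in the case $\rho<1$ of the second bullet. You correctly sense that something resists there, but the obstacle is not a missing technique: the assertion is \emph{false} as literally written. Take $C$ to be a Euclidean Reuleaux triangle with vertices $A,B,C$ and set $p=A$. Pick $q$ on the side centred at $C$ and $r$ on the side centred at $B$, symmetric about the axis through $A$, so that $\|q-A\|_{\B}=\|r-A\|_{\B}=\rho$ with $0<\rho<1$. The arc of $\bd C$ from $q$ to $r$ not containing $A$ runs through $B$ and $C$; its initial piece from $q$ to $B$ lies on $\S(C)$, not on $\S(A)$, and indeed its very endpoint $q$ has $\|q-A\|_{\B}=\rho<1$, so $q\notin\S(A)$. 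Hence neither the ``standard fact'' you invoke (that completeness forces the boundary between $p(s)$ and $p(t_1)$ onto $\S(p)$) nor the alternative monotonicity-violation argument can succeed---each would prove a false statement. The second bullet should be read with the implicit hypothesis $\|q-p\|_{\B}=\|r-p\|_{\B}=1$; under that reading it follows at once from Lemma~\ref{lem:monotonicity} via your $\rho=1$ argument, and no further appeal to completeness is needed. Your proposal should simply record this restriction rather than attempt the general~$\rho$.
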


\begin{lem}\label{lem:disjointdiam}
If $C$ is a complete body in a Minkowski plane,
and $[a,b]$, $[c,d]$ are two disjoint diameters of $C$ such that $a,b,c,d$ are
in counterclockwise order in $\bd C$, then
$[a,d], [b,c] \subset \bd C$ and they are parallel.
\end{lem}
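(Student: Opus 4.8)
The plan is to exploit the monotonicity structure forced by \autoref{lem:monotonicity} and its corollary to locate the two diameters on the boundary circles they generate. Fix the parametrization $t\mapsto p(t)$, $t\in[0,1]$, of $\bd C$ in counterclockwise order, with $a=p(t_a)$, $b=p(t_b)$, $c=p(t_c)$, $d=p(t_d)$ and, say, $0\le t_a<t_b<t_c<t_d<1$. First I would apply the corollary with base point $p=a$: since $\|b-a\|_\B=1$ is the diameter and $[a,b]$ is a diameter, the point $b$ lies on $\S(a)$, and the set of points of $C$ at unit distance from $a$ is a connected subarc of $\S(a)\cap\bd C$; similarly with base point $b$ the point $a$ lies on $\S(b)$. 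The key observation is that $d$ also lies at unit distance from $a$ is \emph{not} automatic, so instead I would argue as follows: because $\diam C=1$, every pair of boundary points is at distance $\le 1$, and a diameter realizes equality; so $\|d-a\|_\B\le 1$ and $\|c-b\|_\B\le 1$.

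Next, the crucial step is to show $\|d-a\|_\B=1$ and $\|c-b\|_\B=1$, i.e.\ that $[a,d]$ and $[b,c]$ are themselves diameters. Here I would use completeness together with the disjointness of $[a,b]$ and $[c,d]$. Consider the base point $b$: by \autoref{lem:monotonicity}, walking along $\bd C$ from $b$, the distance to $b$ strictly increases until it first reaches the diameter value $1$, stays $1$ on a (possibly degenerate) arc, then strictly decreases. Since $a=p(t_a)$ is at distance $1$ from $b$ and lies on the ``decreasing side'' (going counterclockwise from $b$ we pass $c$, $d$, then wrap to $a$), while $c=p(t_c)$ is at distance $1$ from $b$ on the ``increasing/plateau side,'' the plateau of unit-distance points from $b$ must contain the whole boundary arc from $c$ counterclockwise to $a$ — in particular it contains $d$. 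Hence $\|d-b\|_\B=1$, and by the second bullet of the corollary applied with $p=b$, the arc from $c$ to $d$ not containing $b$ (which is the short arc $p([t_c,t_d])$) lies on $\S(b)$. Running the symmetric argument with base point $a$ (whose unit-distance plateau, containing $b$ and wrapping around, must contain $c$ and $d$), we get $\|d-a\|_\B=\|c-a\|_\B=1$, so $[a,d]$ is a diameter and the arc $p([t_c,t_d])$ also lies on $\S(a)$.

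Now both endpoints $a$ and $b$ of the first diameter see the arc $\widehat{cd}=p([t_c,t_d])$ on their respective unit circles, and $[a,b]$ is a diameter. An arc common to two unit circles $\S(a)$ and $\S(b)$ with $\|a-b\|_\B=1$ and $a\neq b$ forces, via convexity of $\B$, that this arc is a line segment parallel to $[a,b]$: indeed a point $x$ with $\|x-a\|_\B=\|x-b\|_\B=1=\|a-b\|_\B$ means the triangle $a,b,x$ has all sides of length $1$, and since $\B$ is not angled we cannot have a ``bent'' unit arc through $b$ — more concretely, the segments $[a,x]$ and $[b,x]$ together with $[a,b]$ and the translate argument show $x$ lies on the line through the vertex of $\B$ in direction perpendicular... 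I would instead phrase it cleanly: the set $\{x:\|x-a\|_\B=\|x-b\|_\B=1\}$ is, by the parallelogram-type structure of balls, contained in a translate of a face of $\B$, hence in a line $\ell$; and $\ell$ is parallel to $[a,b]$ because the common chord of two unit balls whose centers are one unit apart is perpendicular-bisector-like only when... — the clean statement is that $x-a$ and $b-x$ are both unit vectors summing to $b-a$, a unit vector, which by the not-angled hypothesis (no two unit segments meeting at a non-straight angle can sum to a unit vector unless collinear) forces $x-a$ parallel to $b-a$; combined with $x$ ranging over the whole arc $\widehat{cd}$, this arc is a segment $[c,d]'$... wait, it must then BE $[c,d]$, which is already given to be a diameter and hence a chord — consistent — and moreover parallel to $[a,b]$.

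Finally I would assemble: we have shown $[a,d]$ is a diameter; by the symmetric argument starting from the diameter $[c,d]$ (whose endpoints' unit-distance plateaus wrap around and contain $a$ and $b$), $[b,c]$ is a diameter too; and applying the corollary's second bullet with $p=a$ to the pair $b,d$ (both at unit distance from $a$ along $\bd C$) shows the boundary arc from $b$ to $d$ not through $a$ — which contains $c$ — lies on $\S(a)$, forcing $[b,c]\subset\bd C$; symmetrically $[a,d]\subset\bd C$. The parallelism of $[a,d]$ and $[b,c]$ then follows from the observation that $a,b,c,d$ all lie on the two parallel support lines of $C$ determined by the direction of $[a,b]$: since $[a,b]$ and $[c,d]$ are parallel diameters (shown above), $a,b$ lie on one supporting line and $c,d$ on the opposite one, so $[a,d]$ and $[b,c]$ are cross-chords between two parallel lines and, being boundary segments, must each be parallel to the common direction of those chords — more directly, $d-a$ and $c-b$ are both equal to (support-line-gap vector) $+$ (a multiple of the $[a,b]$ direction), but since $\|d-a\|_\B=\|c-b\|_\B=1$ and the configuration is a parallelogram $a,b,c,d$, we get $d-a=c-b$, hence $[a,d]\parallel[b,c]$.

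\medskip

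The main obstacle I anticipate is the middle step: rigorously converting ``two unit circles $\S(a),\S(b)$ with $\|a-b\|_\B=1$ share a nondegenerate arc'' into ``that arc is a segment parallel to $[a,b]$,'' using precisely the not-angled hypothesis and convexity of $\B$, without circular reasoning. The monotonicity lemma does the heavy lifting for locating everything; the extremal-geometry characterization of the shared arc is where care is needed, and it is essentially the only place the not-angled assumption enters.
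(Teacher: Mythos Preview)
Your argument has a fatal gap at the ``crucial step.'' You write that, with base point $b$, the point $c$ lies on the unit--distance plateau of $b$; but nothing in the hypotheses gives $\|c-b\|_{\B}=1$. All you know is $\|a-b\|_{\B}=1$ and $\|c-d\|_{\B}=1$. Monotonicity from $b$ tells you only that the plateau is a connected arc containing $a$; it could start anywhere between $b$ and $a$ along the counterclockwise arc, possibly \emph{after} $c$ and $d$. So the deduction $\|d-b\|_{\B}=1$, and then $\|d-a\|_{\B}=\|c-a\|_{\B}=1$, is unjustified.

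Worse, the target of that step is simply false: $[a,d]$ and $[b,c]$ need not be diameters. Take $\B=[-1,1]^2$ with the $\ell_\infty$ norm (so $C=\B$ is complete of diameter $2$), and $a=(-1,0.5)$, $b=(1,-0.3)$, $c=(1,0.2)$, $d=(-1,0.8)$. Then $a,b,c,d$ are in counterclockwise order on $\bd C$, $[a,b]$ and $[c,d]$ are disjoint diameters, and indeed $[a,d]\subset\bd C$, $[b,c]\subset\bd C$ are parallel vertical segments --- but $\|a-d\|_\infty=0.3$ and $\|b-c\|_\infty=0.5$, neither a diameter. So the whole strategy of showing $[a,d]$, $[b,c]$ are diameters cannot succeed. (This also shows your invocation of the not--angled hypothesis is misplaced: the lemma is stated and holds for \emph{arbitrary} Minkowski planes, and your proof should not use that assumption.)

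The paper's proof is a two--line convexity argument with no monotonicity at all: look at the quadrangle $Q=\conv\{a,b,c,d\}\subseteq C$. Since $[a,b]$ and $[c,d]$ are diameters of $C$, no translate of either can lie in $\inter C$ (otherwise it could be extended to a longer segment in $C$). If $[a,d]$ and $[b,c]$ were not parallel, a small translate of one of $[a,b]$, $[c,d]$ would lie in $\inter Q\subset\inter C$; hence they are parallel. And if, say, $[b,c]\not\subset\bd C$, then again a translate of $[a,b]$ slips into $\inter C$. That is the whole proof.
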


\begin{proof}
Consider the quadrangle $Q=\conv \{ a,b,c,d\}$.
Observe that as $[a,b]$ and $[c,d]$ are diameters of $C$, neither $C$ nor $Q
\subseteq C$ contains a translate of neither $[a,b]$ nor $[c,d]$
in its interior. Thus, $[a,c]$ and $[b,d]$ are parallel, and they belong to $\bd
C$.
\end{proof}

Lemma~\ref{lem:supportinglineproperty} is a straightforward consequence of
Theorems 33.7 and 33.9 of \cite{BMS97}.

\begin{lem}\label{lem:supportinglineproperty}
Let $S$ be a compact set of unique completion in a not angled normed plane, and
let $C$ be its
completion.
Then, for any parallel supporting lines $L$ and $L'$ of $C$, $L$ or $L'$
contains a point of $S$. In other words, $S$ satisfies the supporting line property (see page~\pageref{supplineprop}).
\end{lem}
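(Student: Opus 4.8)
The plan is to deduce this from the structural results on complete bodies in not angled Minkowski planes collected in Chapter~33 of \cite{BMS97}, together with the fact (Proposition~\ref{prop:udm}) that the completion $C$ of $S$ equals $\B S$, and its own completion $\B^2 S$ equals $C$ since $S$ has unique completion. The key point is that in a not angled plane, unique completion is equivalent to a "tightness" condition on the diametral structure of $C$: through every boundary point of $C$ there passes a diameter of $C$, and moreover the endpoints of diameters emanating from points of $\bd C \setminus S$ are forced to lie in certain special positions. Theorems~33.7 and 33.9 of \cite{BMS97} give precisely such descriptions — roughly, Theorem~33.7 characterizes complete bodies via their diametral chords, and Theorem~33.9 characterizes which compact sets complete uniquely in terms of those chords meeting the set.

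First I would recall that a pair of parallel supporting lines $L, L'$ of $C$ determines a \emph{width} direction, and that the distance between $L$ and $L'$ equals the diameter of $C$ (say, normalized to $1$) precisely when there is a diameter $[p,q]$ of $C$ with $p \in L$, $q \in L'$; since $C$ is complete, this happens for \emph{every} pair of parallel supporting lines — this is the constant-width-type property of complete bodies. So for the given $L, L'$ we obtain a diameter $[p,q]$ of $C$ realizing the width, with $p \in L \cap \bd C$ and $q \in L' \cap \bd C$. Next I would invoke the uniqueness of the completion: if \emph{neither} $p$ nor $q$ lay in $S$, then by the characterization in Theorem~33.9 (applied via the fact that $C = \B S$ and $C$ is itself complete, i.e. $\B S = \B^2 S$) one could perturb $C$ near $p$ (and near $q$) to produce a different complete body of the same diameter still containing $S$, contradicting uniqueness. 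Concretely, a diameter both of whose endpoints avoid $S$ is a "free" diametral chord that can be shortened or re-routed while keeping all distances to $S$ below $1$, which is the analogue in this setting of the (LCC)-based argument used in Theorem~\ref{thm:normedextensionSC}; the not-angled hypothesis is what guarantees enough local flexibility of $\bd C$ for this perturbation to exist.

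The main obstacle I anticipate is extracting exactly the right statement from Theorems~33.7 and~33.9 of \cite{BMS97}: those theorems are phrased in terms of the diametral graph / system of diameters of a complete body rather than directly in terms of supporting lines, so the work is in translating "$L$ or $L'$ meets $S$" into "the diameter realizing this width has an endpoint in $S$" and then into the language of \cite{BMS97}. A secondary subtlety is the case where the supporting line $L$ meets $C$ in a segment rather than a single point (possible since $C$ need not be strictly convex, only not angled): then the width-realizing diameter $[p,q]$ may be chosen with $p$ ranging over that segment, and one must check that uniqueness of the completion forces \emph{some} such $p$ (or the corresponding $q$) to lie in $S$ — again this is where Lemma~\ref{lem:disjointdiam} and the corollary to Lassak's monotonicity lemma help, since they pin down that the relevant boundary arcs are circular arcs $\S(\cdot)$ and hence the diametral chords through the segment all share an endpoint. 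Once the translation is set up, the conclusion "$S$ satisfies the supporting line property" is immediate from the definition on page~\pageref{supplineprop}.
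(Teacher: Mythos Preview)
Your proposal is correct and takes essentially the same approach as the paper: the paper does not give an independent proof of this lemma at all, but simply states that it is a straightforward consequence of Theorems~33.7 and~33.9 of \cite{BMS97}, which is exactly the route you identify. Your additional discussion of the translation between the supporting-line formulation and the diametral-chord language of \cite{BMS97}, and of the segment case, goes beyond what the paper records, but is in the same spirit and not at odds with it.
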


\begin{lem}\label{lem:completionarc}
Let $S$ be a compact set of diameter one and of unique completion, $C$.
Then, for any point $x \in (\bd C) \setminus S$, there is an open circle arc of
radius one, containing $p$ and being
contained in $\bd C$, such that its endpoints and its center belong to $S$.
\end{lem}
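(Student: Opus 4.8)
The plan is to locate, through the point $x \in (\bd C)\setminus S$, a point $z \in S$ that realizes the distance from $x$ to $S$, and then to argue that a whole circle arc of $\S(z)$ around $x$ lies on $\bd C$ and ends at points of $S$. First I would use compactness of $S$ to pick $z \in S$ with $d_\B(x,z) = d_\B(x,S) =: \varrho$. I claim $\varrho$ equals the diameter $1$. Indeed, since $C = \B S$ (Proposition~\ref{prop:udm}) is complete of diameter one and $x \in \bd C$, some point of $S$ is at distance exactly one from $x$: if every point of $S$ were at distance $< 1$ from $x$, one could enlarge $C$ keeping diameter one, contradicting completeness; more directly, $x \in \bd\B S$ forces $x$ to lie on $\S(s)$ for some $s\in S$, hence $d_\B(x,s)=1$. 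Thus $\varrho \le 1$; but $\varrho < 1$ is impossible because then a small ball around $x$ disjoint from $S$ could be shaved off $C$ without changing the diameter, again contradicting that $C$ is the (unique) completion. So $d_\B(x,s) = 1$ for the \emph{closest} $s \in S$, which means every point of $S$ is at distance exactly one from $x$, i.e. $S \subseteq \S(x)$.

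With $S \subseteq \S(x)$, I would invoke the Corollary to Lemma~\ref{lem:monotonicity}: apply it with the base point $p := x$ on the complete body $C$. The set of points of $C$ at unit distance from $x$ is a connected arc $\gamma$ of $\S(x) \cap \bd C$, and $S \subseteq \gamma$. Now I want the arc of $\bd C$ \emph{complementary} to $\gamma$ (on the far side from $x$) to be a circular arc centered at some point of $S$ and containing $x$. To get this, take the two endpoints $q,r$ of the arc $\gamma$; these are limits of points of $S$, and since $S$ is compact with $S\subseteq\gamma$, in fact $q,r$ lie in the closure of $S$ — I would need $q, r \in S$ itself, which follows because the extreme points of $\gamma$ at which $S$ accumulates must be attained ($S$ closed). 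Then $\|q - x\|_\B = \|r - x\|_\B = 1$, so by the second bullet of the Corollary the arc of $\bd C$ joining $q$ and $r$ not through $x$ lies on $\S(x)$ — but that is the wrong circle; I actually want the arc \emph{through} $x$. The fix: swap roles. Pick a point $z \in S$ and consider $p := z$ as base point on $C$; since $x$ is at unit distance from $z$ and $x \in \bd C$, the arc of $\bd C$ at unit distance from $z$ is connected and contains $x$ in its relative interior (as $x \notin S$, it is not an endpoint, those being forced to be points of $S$ by the supporting line property, Lemma~\ref{lem:supportinglineproperty}, and the structure of $\gamma$). That arc is a subarc of $\S(z)$, hence a circle arc of radius one through $x$, contained in $\bd C$, with center $z \in S$; its endpoints are the first points of $S$ met going each way along $\bd C$ from $x$, so they belong to $S$.

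The step I expect to be the main obstacle is showing that the endpoints of the maximal circular arc on $\bd C$ through $x$ are genuinely points of $S$, not merely of $\cl S = S$ in a vacuous way, and that $x$ is interior to this arc rather than an endpoint. For the latter I would argue by contradiction using not-angledness: if $x$ were an endpoint, then near $x$ the boundary $\bd C$ would leave the circle $\S(z)$, and combined with the connectedness statement from the Corollary one produces two boundary segments meeting at a vertex, i.e. an angled configuration, excluded by hypothesis; alternatively, the supporting line property of Lemma~\ref{lem:supportinglineproperty} pins a point of $S$ on each relevant supporting line, and these points bound the arc. For the former, I would use that $d_\B(x, S)$ is attained and that moving along $\bd C$ away from $x$ the distance to the nearest point of $S$ is monotone by Lemma~\ref{lem:monotonicity}, so the first violation of "distance one to $z$" occurs exactly at a point where some $s' \in S$ takes over as the unit-distance witness — and that $s'$ is an endpoint of the arc. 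Assembling these observations gives the open circle arc of radius one through $x$, contained in $\bd C$, whose two endpoints and whose center all lie in $S$, as claimed.
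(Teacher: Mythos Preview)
Your argument contains a genuine error: the claim that $d_\B(x,S)=1$, i.e.\ that $S\subseteq\S(x)$, is false. Take $C$ to be a Euclidean Reuleaux triangle with vertices $p_1,p_2,p_3$ and let $S=\{p_1,p_2,p_3,s\}$ where $s$ lies on the arc centered at $p_3$. Then $\B S=C$ and $S$ has unique completion $C$. If $x$ is a point on the arc centered at $p_1$, close to $p_2$, and $s$ is also chosen close to $p_2$, then $d_\B(x,s)<1$, so $d_\B(x,S)<1$. Your ``shaving'' justification is confused: removing a small neighbourhood of $x$ from $C$ produces a \emph{smaller} set still containing $S$ and still of diameter one; this contradicts neither completeness of $C$ (which concerns enlargements) nor uniqueness of the completion. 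Everything built on $S\subseteq\S(x)$ therefore collapses.

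Your ``fix'' is closer to the mark but still incomplete. Choosing $z\in S$ with $\|x-z\|_\B=1$ and looking at the arc $G(z)=\S(z)\cap\bd C$ is the right starting move, and the Corollary to Lemma~\ref{lem:monotonicity} does give that $G(z)$ is connected. But you then assert that the endpoints of the relevant arc are ``the first points of $S$ met going each way along $\bd C$ from $x$'' and that these coincide with (or lie inside) the endpoints of $G(z)$. This is exactly the content of the lemma and you have not proved it: a priori the connected component of $(\bd C)\setminus S$ through $x$ could extend past an endpoint of $G(z)$, continuing along $\S(z')$ for some other $z'\in S$. Ruling this out is where the work lies. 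Your appeal to Lemma~\ref{lem:monotonicity} for monotonicity of ``distance to the nearest point of $S$'' is a misreading of that lemma, which concerns distance from a fixed boundary point, not from a set. The paper's proof handles precisely this difficulty by taking parallel supporting lines through $x$ and through its antipode $p\in S$, invoking Lemma~\ref{lem:supportinglineproperty} to force a point of $S$ onto one end of each supporting segment, and then doing a short case analysis (using Lemma~\ref{lem:disjointdiam} and the not-angled hypothesis) to pin down endpoints of the desired arc in $S$. That case analysis is the missing ingredient in your proposal.
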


\begin{proof}
Let $x \in (\bd C) \setminus S$.
Then, since $\B S = C$, there is a point $p \in S$ such that $x \in \S(p)$.
Clearly, $[p,x]$ is a diameter of $C$, and thus, $p \in \bd C$.
Let $L$ and $L'$ be a pair of parallel supporting lines of $C$ such that $x \in
L$ and $p \in L'$.
For simplicity, we imagine these lines as vertical such that $L$ is to the left
of $L'$.
Let $[a,b] = C \cap L$ and $[c,d] = C \cap L'$, and note that these segments
might be degenerate.
Without loss of generality, we assume that $a$, $b$, $c$ and $d$ are in this
counterclockwise order in $\bd C$.

First, we show that at least one of $a$ and $c$ belongs to $S$.
Indeed, consider a sequence of supporting lines $L_m$ of $C$, with positive
slopes, such that the limit of $L_m \cap C$ is  $\{ a \}$.
For any $m$, let $L'_m$ be the supporting line of $C$, parallel to and different
from $L_m$.
Clearly, the limit of $L'_m \cap C$ is $\{ c \}$.
Now, by Lemma~\ref{lem:supportinglineproperty}, we have that for any $m$, $L_m$
or $L'_m$ contains a point of $S$.
Thus, the observation follows from the compactness of $S$.
We may show similarly that at least one of $b$ and $d$ belongs to $S$.

Now we prove the assertion.
If both $[a,x]$ and $[x,b]$ contain a point of $S$, then we may observe that
$[a,b] \subset \S(p)$, and thus, our lemma follows.
Assume that exactly one of these segments, say $[x,b]$, contains a point of $S$.
Then $a \notin S$, and thus, $c \in S$.
Let $G$ be the arc of $(\bd C) \cap \S(c)$, starting at $x$ and
above the line connecting $x$ and $c$.
If $G$ does not contain a point of $S$, then for some point $c' \in L' \setminus
[c,d]$, we have $S \subset \B(c')$; or in other words, $c' \in \B S = C$; a
contradiction.
Thus, $G$ contains a point of $S$, which yields the assertion.

We are left with the case that $[a,b] \cap S = \emptyset$, which, in particular,
implies that $c,d \in S$.
Note that if $c \neq d$, then, by Lemma~\ref{lem:disjointdiam}, for any $y \in
\relint [c,d]$, we have $C \cap \S(y) = [a,b]$.
Thus, moving $y$ slightly to the right, we can find a point $y'$ such that $S
\subset \B(y')$, but $[a,b] \cap \B(y') = \emptyset$.
This yields that $C \not\subset \B(y')$, or in other words that $y' \notin \B C
= C$, contradicting $y' \in \B S=C$.
Thus, we obtain that $c=d$. In this case, similarly like in the previous
paragraph, one can show that both arcs of $(\bd C) \cap \B(c)$,
starting at $x$, contain a point of $S$, and the assertion readily follows.
\end{proof}

\begin{proof}[Proof of Theorem~\ref{thm:completionNotAngled}]
Note that it suffices to extend $\F$ to a $k$-fold Borsuk covering of $\bd C$.

Let $\F= \{ Q_1, Q_2, \ldots, Q_m\}$ be a $k$-fold Borsuk covering of $S$.
Without loss of generality, we may assume that $S$ is compact.
Let $\varepsilon$ be chosen in such a way that the diameter of every member of
$\F$ is at most $1-3\varepsilon$.
Now, for every $i$, we set $Q^*_i = Q_i + \varepsilon B$, and observe that $\F^*
= \{ Q^*_1, Q^*_2, \ldots, Q^*_m\}$
is still a $k$-fold Borsuk covering of $S$.

Consider the connected components of $\bd C \setminus S$. By
Lemma~\ref{lem:completionarc},
they are open circle arcs of unit radius, with their centers contained in $S$.
Note that $\F^*$ is a $k$-fold covering of any such arc not longer than
$2\varepsilon$.
Since $\bd C$ has a bounded length, there are only finitely many arcs that are
not covered $k$-fold by $\F^*$.
Thus, by induction, it suffices to prove that $\F^*$ can be extended to cover
$k$-fold at least one such arc.

Consider an arc $G$ that is not covered by $\F^*$ $k$-fold. Let $p \in S$ denote
the center, and $q,r \in S$ denote the endpoints of $G$.
If, for every $x \in G$, $p$ is the only point of $C$ at unit distance from $x$,
then we can apply the argument in
the proof of Theorem~\ref{thm:normedextensionSC}.
Thus, assume that for some $x \in G$ and $p' \in C$ with $p \neq p'$, we have $x
\in \S(p')$, where
without loss of generality, we may assume that, say, $[p,r]$ and $[p',x]$ are
disjoint.
Note that since $[p,r]$ and $[p',x]$ are diameters of $C$, we have that $[p,p']$
and $[x,r]$ are parallel, and are contained in $\bd C$.

Let $L$ and $L'$ be the line containing $[r,x]$ and $[p,p']$, respectively.
Observe that the points diametrically opposite to any point in the relative
interior of $L \cap C$ are the points of $L' \cap C$.
Let $y$ be the endpoint of $L \cap \bd C$ closer to $x$ than to $r$.
If $q \in L$, then we may add the segment $[q,x]$ to any $Q_i^*$
containing $q$, and $[x,r]$ to any $Q_i^*$ containing $r$. Thus, we may assume
that $y$ is a point of $G$.

Consider the case that the points diametrically opposite to $y$ are only the points
of $L' \cap C$. Then we may add the segment $[y,r]$ to any $Q_i^*$ containing $r$.
On the other hand, note that if some $u \in \bd C$ is diametrically opposite to any
point of the arc between $y$ and $q$, then it is diametrically opposite to $q$ as well.
Thus, we may add this arc to any $Q_i^*$ containing $q$.

\begin{figure}[ht]
\begin{center}
\includegraphics[width=0.5\textwidth]{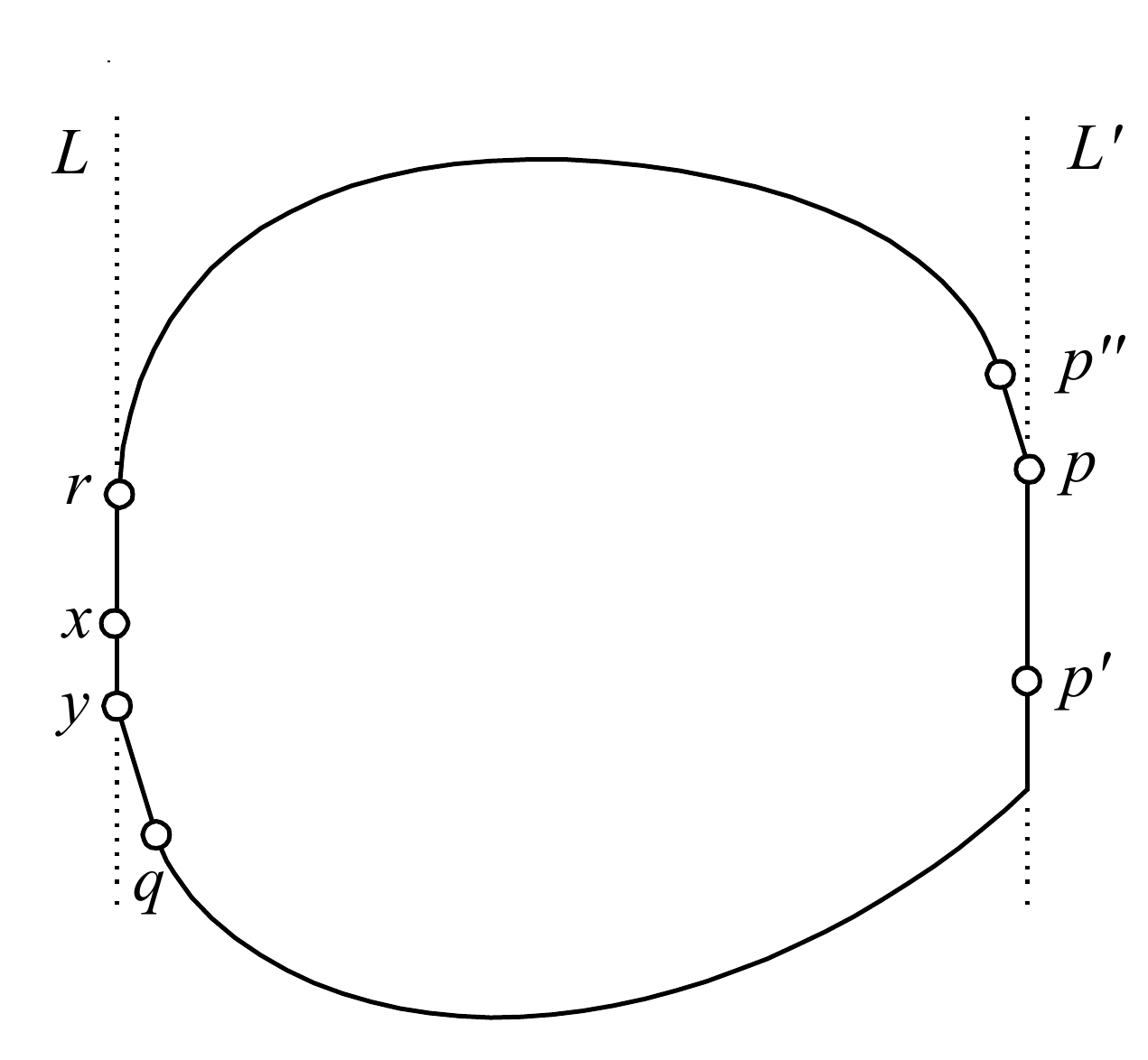}
\caption[]{An illustration for the proof of Theorem~\ref{thm:completionNotAngled}}
\label{fig:uniqueextension}
\end{center}
\end{figure}

Finally, assume that there is some point $p'' \notin L' \cap C$ that is
diametrically opposite to $y$ (cf. Figure~\ref{fig:uniqueextension}).
Then, clearly, the points $p',p,p'',y$ are in this cyclic order in $\bd C$,
and $[y,p'']$ and $[q,p]$ are disjoint diameters of $C$, which yields, by Lemma~\ref{lem:disjointdiam},
that $[p,p'']$ and $[q,y]$ are parallel, and both are contained in $\bd C$.
Thus, $\bd C$, and also $\S(p)$, contains an angle, which
contradicts the conditions of the theorem.
\end{proof}

\begin{cor}\label{cor:udmBorsukcovering}
Let $S$ be a set of unique completion in a not angled normed
plane, and let $C$ be the completion of $S$. Then for any value of $k$, $a_k(S)
= a_k(C)$.
\end{cor}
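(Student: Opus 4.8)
The plan is to prove the two inequalities $a_k(S) \le a_k(C)$ and $a_k(C) \le a_k(S)$ separately, so that together they give equality. The first inequality is immediate from the definitions: if $\F = \{Q_1, \dots, Q_m\}$ is a $k$-fold Borsuk covering of $C$ by sets of diameter less than $\diam C = \diam S$, then $\{Q_1 \cap S, \dots, Q_m \cap S\}$ is a family of subsets of $S$, each of diameter at most that of the corresponding $Q_i$ (hence strictly less than $\diam S$), and since every point of $S \subseteq C$ lies in at least $k$ of the $Q_i$, it lies in at least $k$ of the intersections $Q_i \cap S$. Therefore $a_k(S) \le m$, and taking the infimum over all such coverings of $C$ yields $a_k(S) \le a_k(C)$.

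For the reverse inequality, the key input is Theorem~\ref{thm:completionNotAngled}. Given an optimal $k$-fold Borsuk covering $\F$ of $S$ with $|\F| = a_k(S)$, that theorem (applicable since $S$ has a unique completion $C$ in a not angled normed plane) produces a $k$-fold Borsuk covering of $C$ using the same number of sets — the extension procedure in the proof of Theorem~\ref{thm:completionNotAngled} enlarges the existing members $Q_i^*$ of the family rather than adding new ones, so the cardinality is unchanged. Hence $a_k(C) \le |\F| = a_k(S)$. One small technical point worth checking: the definitions of $a_k$ require the diameter of $S$ (respectively $C$) to be positive, and we should note $\diam S = \diam C$ since a complete set has the same diameter as any set it completes; if $S$ is a singleton the statement is trivial, so we may assume $\diam S = \diam C > 0$ and rescale to $\diam = 1$ as is done throughout the section.

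The only mild subtlety — and the step I would be most careful about — is making sure the extension given by Theorem~\ref{thm:completionNotAngled} genuinely does not increase the number of pieces. Reading the proof of that theorem, each step (the arguments borrowed from Theorem~\ref{thm:normedextensionSC}, and the explicit segment/arc additions for the angled-looking configurations) adjoins points to existing members $Q_i^*$; no new member is ever created. Moreover the passage from $\F$ to $\F^*$ by thickening ($Q_i^* = Q_i + \varepsilon \B$) also preserves cardinality. So the chain $a_k(C) \le |\F^*| = |\F| = a_k(S) \le a_k(C)$ closes, and equality follows. This is the entire argument; there is no real obstacle beyond invoking the preceding theorem and the monotonicity of $a_k$ under restriction.
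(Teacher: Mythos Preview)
Your argument is correct and is exactly the intended one: the paper states this as an immediate corollary of Theorem~\ref{thm:completionNotAngled} without a separate proof, and your two-inequality derivation---restriction for $a_k(S)\le a_k(C)$ and the extension theorem for $a_k(C)\le a_k(S)$---is precisely how it follows. Your check that the extension in Theorem~\ref{thm:completionNotAngled} never introduces new members of the family (only enlarges existing $Q_i^*$) is the right point to verify and is accurate.
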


\section{The multiple Borsuk numbers of sets in a not angled normed plane}

We start with three observations, which, for sets in a Euclidean space, appeared
as Remarks 1--3 in \cite{HL13}. Their proofs are straightforward
modifications of those in \cite{HL13}, and hence we omit them.

\begin{rem}\label{rem:subadditive}
The sequence $a_k(S)$ is sub-additive for every set $S$ in any normed (or
metric) space. More precisely, for any positive integers $k,l$, we have $a_{k+l}(S) \leq a_k(S) + a_l(S)$.
\end{rem}

\begin{rem}\label{rem:prelim}
Let $S$ be a set of diameter $d > 0$ in a normed (or metric) space. Then
for every set $S$ of diameter $d > 0$ and every $k \geq 1$, we have
$a_k(S) \geq 2k$. Furthermore, for every value of $k$, if $a(S) = 2$, then
$a_k(S) = 2k$, and
if $a(S) > 2$, then $a_k(S) > 2k$.
\end{rem}

\begin{rem}\label{rem:boundary}
Let $S \subset \Re^n$ be a set of positive diameter in a normed space.
Then for every value of $k$, $a_k(S) = a_k(\bd S)$.
\end{rem}

Let $S$ be a bounded set in a normed plane. By Theorem~\ref{thm:normedBorsuk},
if $S$ is not a set of unique completion then $a(S)=2$, which yields that for
any $k$,
$a_k(S) = 2k$.
Combined with Corollary~\ref{cor:udmBorsukcovering}, it yields that it suffices
to
characterize the $k$-fold Borsuk numbers of complete sets.
To do this, we need a generalization of the notion of Reuleaux polygons for
normed planes (cf. also \cite{T96}, \cite{S69} and \cite{H63}).

\begin{defn}\label{defn:Reuleaux}
Let $C$ be a complete set in a normed plane.
If $C$ is the intersection of finitely many translates of $\B$, we say that
$C$ is a \emph{Reuleaux polygon}. If $m$ is the smallest number such that $C$
is the
intersection of $m$ translates of $\B$,
then we say that \emph{$C$ has $m$ sides}.
\end{defn}


\begin{thm}\label{thm:notReuleaux}
Let $C$ be a complete set of diameter one in a normed plane, which is not a
Reuleaux
polygon.
Then for every $k$, $a_k(C) = 2k+1$.
\end{thm}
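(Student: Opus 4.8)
The plan is to prove the two inequalities $a_k(C) \geq 2k+1$ and $a_k(C) \leq 2k+1$ separately. The lower bound should be immediate: since $C$ is a complete set that is not a Reuleaux polygon, in particular $C$ is not a singleton and is a set of unique completion (it is its own completion), so by \autoref{thm:normedBorsuk} we have $a(C) = 2$ only in the cases excluded here — more precisely, any complete set of positive diameter has $a(C) \geq 3$ unless it fails the supporting line property, but a complete set always satisfies it trivially since every pair of parallel supporting lines touches $C$. Parallelograms are Reuleaux polygons (intersection of $4$ translates of the square ball), so they are excluded too. Hence $a(C) \geq 3$, and by \autoref{rem:prelim}, $a(C) > 2$ forces $a_k(C) > 2k$, i.e. $a_k(C) \geq 2k+1$. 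The only subtlety is checking carefully that "not a Reuleaux polygon" together with completeness rules out exactly the $a(C)=2$ cases and the parallelogram case of \autoref{thm:normedBorsuk}; I would spell this out.

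For the upper bound $a_k(C) \leq 2k+1$, the natural approach is first to handle $k=1$, i.e. exhibit a $3$-fold — sorry, a (single) Borsuk covering of $C$ by $3$ sets, and then bootstrap. Actually the cleaner route is to directly construct a family of $2k+1$ sets of diameter $< 1$ covering $C$ $k$-fold. Here is the idea: since $C$ is complete and not a Reuleaux polygon, its boundary is not a finite union of unit-circular arcs; combined with \autoref{lem:completionarc} and the corollary to Lassak's lemma, the boundary $\bd C$ is a simple closed curve along which the "diametral" structure behaves nicely. I would pick a point $p_0 \in \bd C$ and use Lassak's monotonicity lemma: the set of points of $C$ at distance exactly $1$ from $p_0$ is a connected arc, and points near $p_0$ on $\bd C$ are at distance $<1$ from it. The strategy is to choose $2k+1$ points $p_0, p_1, \ldots, p_{2k}$ cyclically on $\bd C$ and let $Q_j = C \cap \B_{1-\delta}(p_j)$ (a "cap" of $C$ of diameter $<1$ for suitable small $\delta>0$), arranged so that the "uncovered" regions — the points of $C$ at distance $\geq 1-\delta$ from $p_j$ — are small arcs that are covered by the other $Q_i$'s with multiplicity at least $k$. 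Since $C$ is not a Reuleaux polygon, no single such cap can be "large" in the bad way that would happen for Reuleaux polygons, which is precisely what makes $2k+1$ rather than $2k+2$ achievable.

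The main obstacle will be organizing the combinatorics of the multiplicities: I must ensure that every point $x \in C$ lies in at least $k$ of the $2k+1$ sets $Q_j$. The cleanest way is to note that $x$ fails to lie in $Q_j$ only when $x$ is within distance $\delta$ of the "far arc" $F_j$ of points diametral-ish to $p_j$; by choosing the $p_j$ equally spaced (in the arc-length parametrization of $\bd C$, extended to $\conv$) and $\delta$ small enough, each $x$ is excluded from at most one $Q_j$, hence lies in $\geq 2k \geq k$ of them — but that only gives a $2k$-fold covering by $2k+1$ sets, which is not quite strong enough to be tight, so I would instead aim the construction so that the bad arcs are genuinely disjoint, giving the exact count. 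A slicker alternative, which I would try first: use \autoref{rem:subadditive} together with a base case. If I can show $a_1(C) = 3$ and also that $a_2(C) \leq 5$ directly (by a hands-on $5$-set construction), then $a_{k}(C) \leq a_{k-2}(C) + a_2(C)$ gives $a_k(C) \leq 2k+1$ by induction on parity, with the odd and even base cases $a_1(C) = 3$ and $a_3(C) \leq 7$ (or rather $a_1=3$, $a_2=5$, then $a_k \le a_{k-1}+a_1$ gives $a_k \le a_{k-1}+3$, too weak — one really needs $a_{k+2} \le a_k + 4$, i.e. a $4$-set addition, which fails since $a_2 = 5$). So the subadditivity shortcut does \emph{not} close the gap, and the honest construction above is unavoidable; the heart of the argument is the explicit placement of the $2k+1$ caps using Lassak's lemma and the non-Reuleaux hypothesis to keep each excluded arc inside the union of enough other caps. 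I expect the careful verification of "each point is excluded from at most one cap" — really a statement that the far-arcs $F_0, \ldots, F_{2k}$, suitably thickened, are pairwise disjoint — to be the technically delicate step, and it is where not being a Reuleaux polygon (hence $\bd C$ containing a non-arc portion, or at least not being rigidly polygonal) gets used.
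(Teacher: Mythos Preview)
Your lower bound is fine: a complete set is its own unique completion and trivially satisfies the supporting line property, and if $\B$ were a parallelogram then $C$ would be a translate of $\B$, hence a Reuleaux polygon; so \autoref{thm:normedBorsuk} gives $a(C)\geq 3$ and \autoref{rem:prelim} gives $a_k(C)\geq 2k+1$. This matches the paper.

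The upper bound, however, has a genuine gap. Your sets $Q_j=C\cap\B_{1-\delta}(p_j)$ are \emph{not} of diameter $<1$ in general. Take $C$ to be the Euclidean disk of diameter one and $p_j$ any boundary point: the chord orthogonal to $[p_j,-p_j]$ through the center is a diameter of $C$ whose endpoints are both at distance $1/\sqrt{2}<1-\delta$ from $p_j$ for all small $\delta$, so they both lie in $Q_j$ and $\diam Q_j=1$. Nothing in the non-Reuleaux hypothesis rescues this; the cap construction simply does not produce sets of smaller diameter. (Separately, your remark that a $2k$-fold covering by $2k+1$ sets is ``not strong enough to be tight'' is backwards: such a covering is a fortiori a $k$-fold covering and would give exactly $a_k(C)\leq 2k+1$, if only the pieces had small diameter.)

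The paper's construction is quite different. Working on $\bd C$ (legitimate by \autoref{rem:boundary}), it finds $2k+1$ diameters $[p_i,p_{2k+1+i}]$ whose $4k+2$ endpoints are pairwise at distance $<1$ except for the prescribed diametral pairs. This is done inductively: after choosing some diameters, one removes from $\bd C$ the unit-circle arcs $\S(q)\cap C$ centred at the endpoints of the arcs $G(p_i)=\S(p_i)\cap C$ already used; since $C$ is not a Reuleaux polygon, $\bd C$ is never exhausted by finitely many such arcs, so a new diameter disjoint (in the diametral sense) from all previous ones can always be found. Once the $4k+2$ points sit in cyclic order on $\bd C$, the arc $A_i$ from $p_i$ to $p_{i+2k}$ avoiding $p_{i+2k+1}$ has diameter $<1$ by Lassak's monotonicity lemma, and the arcs $A_{1+ts}$, $t=1,\dots,2k+1$, form a $k$-fold Borsuk cover. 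The non-Reuleaux hypothesis is used precisely and repeatedly in the inductive step, not as a vague non-rigidity statement.
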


\begin{proof}
Clearly, by Remark~\ref{rem:prelim}, for every $k$, we have $a_k(C) \geq 2k+1$.
Thus, we need to show that if $C$ is not a Reuleaux polygon, then $C$, or
equivalently, $\bd C$, can be covered $k$-fold by $2k+1$ subsets
of smaller diameters.

To do this, we prove the existence of $2k+1$ diameters $[p_i,p_{2k+1+i}]$, where
$i=1,2,\ldots,4k+2$ of $C$, such that for any $j \neq 2k+1+i$,
$[p_i,p_j]$ is not a diameter of $C$.
Observe that from this, the assertion follows.
Indeed, by Lemma~\ref{lem:monotonicity}, we have that any two of these diameters intersect.
Thus, we may label their endpoints in such a way that
$p_1,p_2, \ldots, p_{4k+2}$ are in
counterclockwise order in $\bd C$.
Let $A_i$ be the arc of $\bd C$, connecting $p_i$ and $p_{2k+i}$ and not containing $p_{2k+1+i}$.
Then $A_i$ is of diameter less than one, and the arcs $A_{1+ks}$, where $s=1,2,\ldots,2k+1$, form a $k$-fold Borsuk covering of $\bd C$.

For simplicity, for any point $x \in \bd C$, we set $G(x) = C \cap \S(x)
\subset \bd C$.
We choose the required diameters as follows. Let $[p_1,p_{2k+2}]$ be an arbitrary
diameter of $C$.
Let $q_1, r_1$ and $q_{2k+2}, r_{2k+2}$ be the endpoints of the arcs $G(p_1)$ and $G(p_{2k+2})$, respectively. 
It follows from Lemma~\ref{lem:monotonicity} that $G(p_{2k+2}) \subseteq G(q_1) \cup G(r_1)$ and $G(p_1) \subseteq G(q_{2k+2}) \cup G(r_{2k+2})$.
Then, as no finitely many unit circle arcs cover $\bd C$, $X_2 = \bd C \setminus
(G(q_1) \cup G(r_1) \cup G(q_{2k+2}) \cup G(r_{2k+2})) \neq \emptyset$.

Observe that for any $x \in X_2$, $||x-p_1||_{\B}$ and $||x-p_{2k+2}||_{\B}$ are strictly
less than one, and any point diametrically opposite to $x$ is also contained in $X_2$.
Let $p_2 \in X_2$ arbitrary. Since $C$ is complete, there is some
$p_{2k+3} \in \bd C$ such that $[p_2,p_{2k+3}]$ is a diameter of $C$.
Then $p_{2k+3} \in X_2$; that is, $||p_{2k+3}-p_1||_{\B}$ and $||p_{2k+3}-p_{2k+2}||_{\B}$ are strictly less than one.
Let us define $q_2, r_2, q_{2k+3}, r_{2k+3}$ similarly as for $p_1$ and $p_{2k+2}$.
Now, set $X_3 = X_2 \setminus (G(q_2) \cup G(r_2) \cup G(q_{2k+3}) \cup G(r_{2k+3}))$.
Since $\bd C$ is not covered by finitely many unit circle arcs, we have $X_3 \neq \emptyset$.
Thus, using the argument as for $[p_2,p_{2k+3}]$, we can find a diameter $[p_3,p_{2k+4}]$
with $p_3,p_{2k+4} \in X_3$, satisfying the required conditions.
Since $C$ is not a Reuleaux polygon, repeating this procedure we may choose the required $2k+1$ diameters
for any value of $k$.
\end{proof}

\begin{thm}\label{thm:Reuleaux}
If $C$ is an $m$-sided Reuleaux polygon of diameter one in the not angled norm
with unit disk $\B$, then
\begin{enumerate}
\item $m$ is an odd integer,
\item if $m=2s+1$, then the $k$-fold Borsuk number of $C$ is $a_k(C) = 2k +
\left\lceil \frac{k}{s} \right\rceil$.
\end{enumerate}
\end{thm}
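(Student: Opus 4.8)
The plan is to follow the scheme of the proof of Theorem~\ref{thm:notReuleaux}: decompose $\bd C$ into short arcs cut off by diameters and bundle these arcs into $a_k(C)$ classes, each of diameter less than one. The key structural fact about an $m$-sided Reuleaux polygon $C$ with $m=2s+1$ is that its boundary consists of $m$ maximal unit-circle arcs $A_1,\dots,A_m$ (in counterclockwise order), each $A_i$ being an arc of $\S(v_i)$ for one of the $m$ ``vertices'' $v_i$ of $C$, and the vertices are themselves the endpoints of these arcs; moreover each vertex $v_i$ is diametrically opposite to precisely the points of the ``opposite'' arc $A_{i+s}$ (indices mod $m$), and $[v_i, v_{i+s}]$, $[v_i,v_{i+s+1}]$ are the two diameters emanating from $v_i$. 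First I would establish part (1), that $m$ is odd: if $C$ were the intersection of an even number $m=2s$ of translates of $\B$, one checks that the supporting line property (Lemma~\ref{lem:supportinglineproperty}) together with the structure of the arcs forces two of the defining translates to be ``antipodal,'' reducing the count, unless a parallelogram-type degeneracy occurs which is excluded in the not angled case; alternatively, this follows from the known classification of Reuleaux polygons in Minkowski planes, and I would cite \cite{BMS97} for it.

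For the lower bound $a_k(C)\ge 2k+\lceil k/s\rceil$ in part (2), the idea is a counting/pigeonhole argument on the vertices. In any Borsuk covering $\mathcal F$ of $C$ by sets of diameter less than one, no member of $\mathcal F$ can contain both a vertex $v_i$ and any point diametrically opposite to it, i.e.\ no point of the opposite arc $A_{i+s}$ (nor, by Lemma~\ref{lem:monotonicity}, anything ``beyond'' it). In a $k$-fold covering each of the $m=2s+1$ vertices lies in at least $k$ members, so the incidences between vertices and members total at least $k(2s+1)$. The crucial claim is that a single member $Q\in\mathcal F$ can contain at most $s$ of the $2s+1$ vertices: the vertices it contains must lie on an arc of $\bd C$ of diameter less than one, and such an arc of a $(2s+1)$-sided Reuleaux polygon spans strictly fewer than $s+1$ consecutive vertices (since $s+1$ consecutive vertices already include a pair at distance one, as the ``span'' of $s+1$ consecutive arcs reaches the opposite vertex). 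Hence $|\mathcal F|\cdot s\ge k(2s+1)$, giving $|\mathcal F|\ge k\cdot\frac{2s+1}{s}=2k+\frac{k}{s}$, and since $|\mathcal F|$ is an integer, $|\mathcal F|\ge 2k+\lceil k/s\rceil$. I expect this pigeonhole step to be essentially the whole lower bound once the ``at most $s$ vertices per member'' claim is pinned down via Lemma~\ref{lem:monotonicity}.

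For the upper bound I would construct an explicit $k$-fold covering with exactly $2k+\lceil k/s\rceil$ pieces. Writing $a:=2k+\lceil k/s\rceil$, I want $a$ arcs each of diameter less than one, covering $\bd C$ exactly $k$ times. The natural candidates are ``running'' arcs: index the positions around $\bd C$ cyclically and let each piece be a maximal arc avoiding any antipodal pair; because $C$ has only the $m$ discrete vertices as ``corners'' and between consecutive diameter-endpoints the distances stay below one, such arcs can be taken to start and end at vertices. Concretely, one takes arcs each spanning $s$ consecutive vertex-arcs (so spanning $s$ of the $A_i$'s, which has diameter less than one) and shows that $a$ cyclic shifts of such a length-$s$ window cover each point of $\bd C$ at least $k$ times: a point in $A_i$ is covered by a window iff the window's starting index is one of $s$ consecutive values, and distributing $a$ starting indices as evenly as possible among the $2s+1$ residues, each residue is hit $\lfloor a/(2s+1)\rfloor$ or $\lceil a/(2s+1)\rceil$ times, so each point gets covered at least $s\lfloor a/(2s+1)\rfloor\ge k$ times by the choice $a=2k+\lceil k/s\rceil$; one checks the arithmetic $s\cdot\lfloor (2k+\lceil k/s\rceil)/(2s+1)\rfloor \ge k$ directly. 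The main obstacle I anticipate is exactly this arithmetic bookkeeping — verifying that $a=2k+\lceil k/s\rceil$ is simultaneously large enough for the even-distribution covering argument and matches the pigeonhole lower bound, handling the cases $s\mid k$ and $s\nmid k$ separately — together with justifying that length-$s$ vertex-windows really do have diameter strictly less than one (which is where not angledness and Lemma~\ref{lem:monotonicity} enter, ruling out that the window's extreme points are antipodal).
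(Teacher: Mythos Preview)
Your outline has the right overall architecture, but several steps do not go through as written. First, for part~(1) you essentially defer to a citation; the paper gives a self-contained structural proof (showing that each intersection of two consecutive sides contains exactly one center, whence the centers and certain interior points alternate along $\bd C$, forcing $m$ odd), and this analysis is also what sets up the upper-bound construction in part~(2). Second, in your lower bound you claim that the vertices contained in a single piece $Q$ of diameter less than one must be \emph{consecutive} on $\bd C$. This is false: in a Reuleaux $9$-gon ($s=4$) the set $\{p_1,p_4,p_7\}$ has all pairwise distances below one but is not contained in any run of four consecutive vertices. The correct statement is that the diameter graph on the vertex set is a $(2s+1)$-cycle, so any piece meets at most $s$ vertices because that is the independence number; your pigeonhole then gives the right bound $\lceil k(2s+1)/s\rceil=2k+\lceil k/s\rceil$. (The paper reaches the same bound by identifying the problem with the $k$-fold chromatic number of $C_{2s+1}$ and citing Stahl.)

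The more serious gap is in the upper bound. Your proposed pieces, each the union of $s$ consecutive closed sides, have diameter \emph{exactly} one: the two extreme vertices of such a window are $p_{i}$ and $p_{i+s}$, which are a diametral pair. So ``ruling out that the window's extreme points are antipodal'' is precisely what fails. The paper repairs this by choosing, for each side $G_i$, an interior point $z_i$ lying on no other side, and then taking arcs from $z_i$ to $z_{i+s}$; but one must further arrange that $\{z_i\}$ itself contains no diametral pair, and in a merely not-angled (not strictly convex) plane this is a genuine issue that the paper spends a full paragraph (and Figure~\ref{fig:Reuleaux}) resolving via Lemma~\ref{lem:disjointdiam}. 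Finally, your covering arithmetic ``$s\lfloor a/(2s+1)\rfloor\ge k$'' is wrong already for $k=1$, $s=2$ (it gives $2\cdot\lfloor 3/5\rfloor=0$); the correct verification uses the specific cyclic shifts $A_{js}$, $j=0,\dots,a-1$, with $\gcd(s,2s+1)=1$, rather than a generic ``even distribution'' estimate.
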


\begin{proof}
Let $G_i$, where $i=1,2,\ldots,m$, be unit circle arcs that cover $\bd C$, and
let $p_i, q_i$ and $r_i$  be the center and the two endpoints of $G_i$,
respectively. Clearly, we may assume that no $G_i$ is a proper subset of any
unit circle arc in $\bd C$.

We label the points in such a way that in counterclockwise order, $q_i$ is the
starting and $r_i$ is the endpoint of $G_i$, and
the points $q_1,q_2,\ldots,q_m$ are in this counterclockwise order in $\bd C$.
For simplicity, we call the $G_i$s the \emph{sides}, and their endpoints the
\emph{vertices} of $C$.
Note that $r_1,r_2,\ldots,r_m$ are in this counterclockwise order as well, as
otherwise $G_i \subset G_j$ for some $i \neq j$, which contradicts the
assumption
that $C$ is $m$-sided.
By Lemma~\ref{lem:monotonicity}, we have that $p_1,p_2,\ldots,p_m$ are also in
this counterclockwise
order.

Since $C$ is complete, $p_i \in \bd C$ for every value of $i$.
Furthermore, since $m$ is the minimal number of unit circle arcs that cover $\bd
C$,
there is no point that belongs to more than two arcs. We observe also that if
$p_i$ is in the relative interior of a segment $[x,y] \subset \bd C$, then, by
Lemma~\ref{lem:disjointdiam},
$G_i = [q_i,r_i]$ is a segment.
Thus, replacing $G_i$ by, say $\S(x) \cap C$, we still have a family of
$m$ unit circle arcs that cover $\bd C$.
This implies that, without loss of generality, we may assume that no $p_i$ is in
the relative interior of a segment on $\bd C$.

Consider, first, the case that two consecutive sides, say $G_i$ and $G_{i+1}$
overlap. Then $q_i,q_{i+1},r_i$ and $r_{i+1}$
are in this counterclockwise order in $\bd C$. Thus, $[p_i,r_i]$ and
$[p_{i+1},q_{i+1}]$ are disjoint diameters, which
yields, by Lemma~\ref{lem:disjointdiam}, that $[p_i,p_{i+1}], [q_{i+1},r_i]
\subset \bd C$, and that they are parallel.
Hence, for any two overlapping sides of $C$, the common part is a straight line
segment.

Now we show that the intersection of any two consecutive sides of $C$ contains
the center of exactly one side.
Consider the sides $G_i$ and $G_{i+1}$.

\emph{Case 1}, $G_i$ and $G_{i+1}$ do not overlap.
Then $r_i = q_{i+1}$.
Observe that $p_i,p_{i+1} \in \S(r_i) \cap C$.
Let $G$ be the arc of $\bd C$ connecting $p_i$ and $p_{i+1}$ and not
containing $r_i$.
We show that there is a point in the relative interior of $G$ which is
diametrically opposite only to $r_i$.
Note that since $C$ is a Reuleaux polygon, it yields that in this case $C \cap
\S(r_i)$ must be a side of $C$.

Let $p$ be an arbitrary relative interior point of $G$, and assume that $C \cap
\S(p)$ contains not only $r_i$, but some other point $x$ as well.
Without loss of generality, we may assume that $x \in G_i$, which yields that
$[p_i,r_i]$ and $[p,x]$ are disjoint diameters of $C$.
Thus, by Lemma~\ref{lem:disjointdiam}, $[p,p_i], [r_i,x] \subset \bd C$, and
they are parallel.
Since here $p$ is an arbitrary relative interior point of $G$, we have that
either $G=[p_i,p_{i+1}]$
or there is some relative interior point $z$ of $G$ such that $G=[p_i,z] \cup
[z,p_{i+1}]$.
Observe that $G \subset C \cap \S(r_i)$, and hence, as $\B$ is not angled,
it follows that $G=[p_i,p_{i+1}]$.
Furthermore, for some point $x \in C$, we have that $[r_i,x] \subset \bd C$, and
that $[r_i,x]$ and $[p_i,p_{i+1}]$ are parallel.
This means that $[r_i,x]$ belongs to both $\S(p_i)$ and $\S(p_{i+1})$,
which contradicts our assumption that $G_i$ and $G_{i+1}$
do not overlap.

\emph{Case 2}, $G_i$ and $G_{i+1}$ overlap; or in other words, $r_i \neq
q_{i+1}$.
Then, similarly like in Case 1, we have that $[r_i,q_{i+1}], [p_i,p_{i+1}]
\subset \bd C$, and they are parallel.
Let $L$ and $L'$ denote the line containing $[p_i,p_{i+1}]$ and $[r_i,q_{i+1}]$,
respectively.
Observe that $\S(p_i)$ and $\S(p_{i+1})$ both contain $C \cap L'$, and
thus,
we have $C \cap L' = [r_i,q_{i+1}]$.
Furthermore, note that, for any point $p$ in the relative interior of
$[p_i,p_{i+1}]$,
the points of $C$ diametrically opposite to $p$ are exactly the points of
$[r_i,q_{i+1}]$.
Thus, the center of any side of $C$ containing $p$ is a point of
$[q_{i+1},r_i]$.
Since we chose the sides of $C$ in such a way that no center is contained in a
straight line segment in $\bd C$,
we have that only $q_{i+1}$ or $r_i$ can be the center of a side, and also that
$L \cap C = [p_i,p_{i+1}]$.

Suppose, for contradiction, that both $q_{i+1}$ and $r_i$ are centers, and let
these sides be $G_j$ and $G_{j+1}$.
Then, we have $[p_i,p_{i+1}] \subseteq G_j \cap G_{j+1}$, and, similarly like in
the previous paragraph, we may obtain that $[p_i,p_{i+1}] = G_j \cap G_{j+1}$.
Thus, $q_{j+1} = p_i$ and $r_j = p_{i+1}$.
Since $q_i \neq q_{i+1}=p_j$ and $q_j \neq q_{j+1}=p_i$, it follows that
$[q_{i+1},q_j]$ and $[p_i,q_i]$ are disjoint diameters of $C$.
Hence, by Lemma~\ref{lem:disjointdiam}, we have that $[q_i,q_{i+1}]$ and
$[q_j,p_i]$ are parallel and contained in $\bd C$.
Thus, $[q_i,q_{i+1}]$ and $[q_{i+1},r_i]$ are both contained in $\S(p_i)$,
which contradicts our assumption that the normed plane is not angled.

We have shown that the intersection of any two consecutive sides contains the
center of exactly one side.
Since any point of $\bd C$ belongs to at most two sides of $C$, these
intersections are pairwise disjoint.
As the number of centers is equal to the number of intersections, it follows
that the center of every side of $C$
is contained in one of these intersections.
In fact, we showed a bit more: every center is the vertex of some other side.

For every value of $i$, consider a point $z_i$ that belongs to $G_i$ but no
other side of $C$.
Note that since no point of $\bd C$ belongs to more than two sides of $C$, this
is possible, and also that,
by Lemma~\ref{lem:disjointdiam}, the segments $[p_i,z_i]$, where
$i=1,2,\ldots,m$, are pairwise intersecting diameters of $C$.
Clearly, the $2m$ points $p_i$ and $z_j$ form an alternating sequence $S$ in
$\bd C$, and each of the two open arcs of $\bd C$,
starting at, say, $p_1$ and ending at $z_1$, contains exactly $m-1$ points.
Since the subsequence of $S$ in any of the above two arcs, starts with some
$z_i$ and ends with some $p_j$, we have that $m-1$ is an even number, and thus,
$m$ is odd.

Now we prove the second part.
Let $m=2s+1$. According to the previous paragraph, we have that for every $i$,
$p_i \in G_{i+s} \cap G_{i+s+1}$.
First, we show that the points $z_i$ can be chosen in such a way that the set
$Z=\{ z_i : i=1,2,\ldots,m\}$ contains
no diametrically opposite pair.

Assume that for every $i$, $z_i$ belongs to only $G_i$, but $Z$ contains a
diametrically opposite pair, say $z_i$ and $z_j$.
Then $j=i-s$ or $j=i+s$. Without loss of generality, we may assume that $z_i$
and $z_{i+s}$ are diametrically opposite.
From this, by Lemma~\ref{lem:disjointdiam}, we obtain that $[z_i,p_{i+s}]$ and
$[z_{i+s},p_i]$ are parallel and are contained in $\bd C$.
Let $L$ be the line containing $[p_{i+s},z_i]$.
Note that $p_{i+s}$ is an endpoint of $L \cap C$, and let $x$ be the other
endpoint.
Observe that $p_{i-s} \notin L$, as otherwise $z_{i+s} \in G_{i-s}$, which is a
contradiction.
In addition, $x$ is not diametrically opposite to $z_{i-s}$.
Indeed, if $[x,z_{i-s}]$ is a diameter, then $[x,z_{i-s}]$ and $[p_{i-s},p_i]$ are disjoint diameters, and thus
Lemma~\ref{lem:disjointdiam} yields that $[x,p_{i-s}],[z_{i-s},p_i] \subset \bd C$, which contradicts our assumption that
the normed plane is not angled (cf. Figure~\ref{fig:Reuleaux}).
Now we choose any point $y\in(\bd C)\setminus L$ sufficiently close to $x$, and replace $z_i$ by $y$.
Then, clearly, $y$ is diametrically opposite to neither $z_{i-s}$ nor $z_{i+s}$.
Thus, to choose a subset $Z$ that does not contain diametrically opposite
points, we start with any set and then, applying the argument of this paragraph, we
may replace the points one by one to reduce the number of diametrically opposite
points.

\begin{figure}[ht]
\begin{center}
\includegraphics[width=0.5\textwidth]{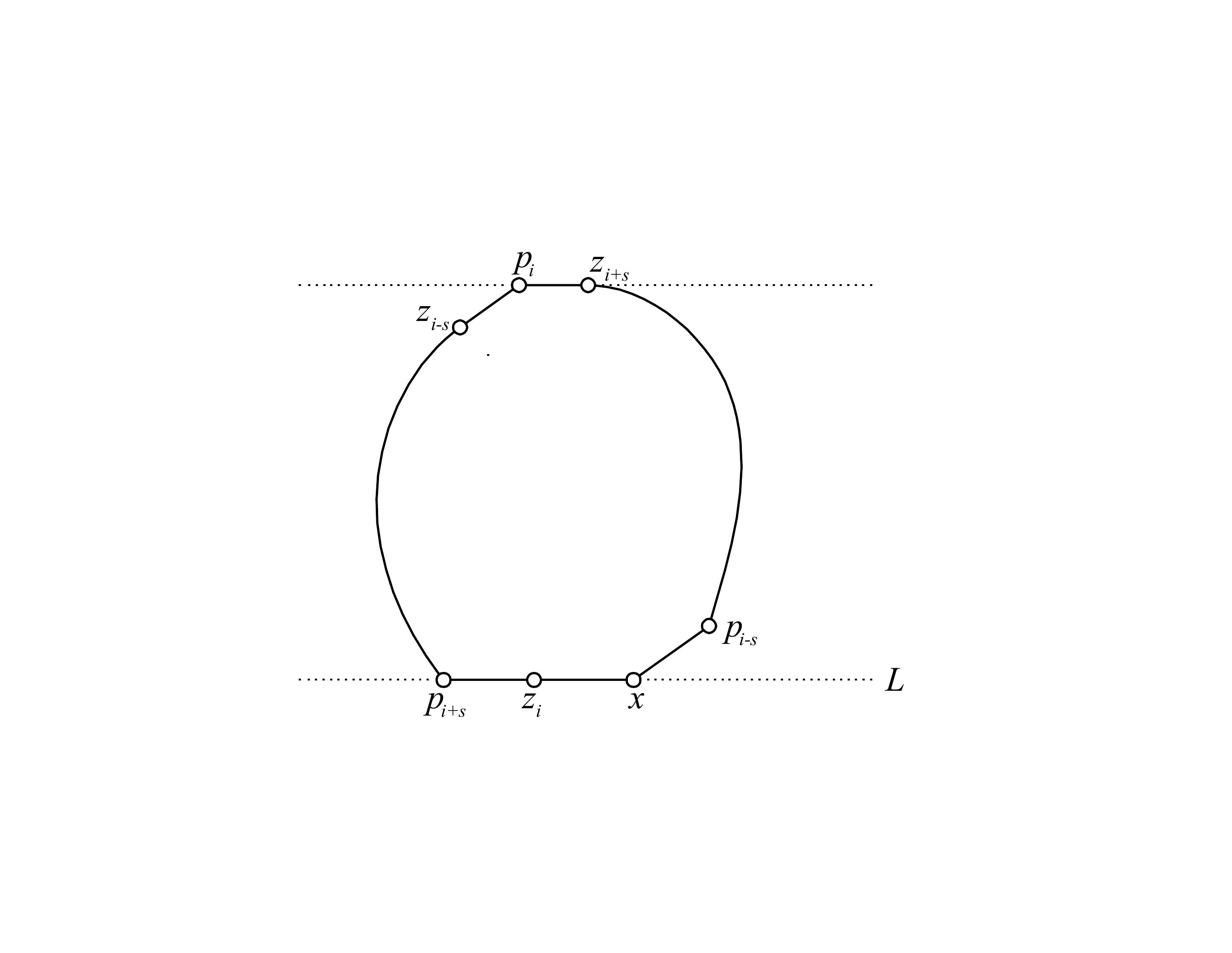}
\caption[]{An illustration for the proof of Theorem~\ref{thm:Reuleaux}}
\label{fig:Reuleaux}
\end{center}
\end{figure}

We constructed a subset $Z=\{ z_i: i=1,2,m\}$ such that for every $i$, $z_i$
belongs only to $G_i$, and $Z$ contains no diametrically opposite pair.
Let $A_i$ denote the closed arc of $\bd C$, which, in counterclockwise order,
starts at $z_i$ and ends at $z_{i+s}$.
Observe that by Lemma~\ref{lem:monotonicity} and the choice of $Z$, no such arc
contains a diametrically opposite pair.
On the other hand, the sets $A_{js}$, where $j=0,1,\ldots,2k + \left\lceil
\frac{k}{s} \right\rceil -1$ and the indices are taken $\mod m$,
covers $\bd C$ $k$-fold, and thus, they are a $k$-fold Borsuk covering of $\bd
C$. This proves that $a_k(C) \leq 2k + \left\lceil \frac{k}{s} \right\rceil$.

To prove the other direction, we note that the $k$-fold Borsuk coverings of the
set $\{p_i:i=1,2,\ldots,m\}$ can be identified with the $k$-fold
vertex-colorings of a $(2s+1)$-cycle. Since it is known (cf. \cite{S76}) that
the $k$-fold chromatic number of such a cycle is $2k + \left\lceil \frac{k}{s}
\right\rceil$,
the assertion follows.
\end{proof}

From Theorems~\ref{thm:normedBorsuk}, \ref{thm:notReuleaux} and
Remark~\ref{rem:prelim}, we immediately
obtain the following.

\begin{thm}\label{thm:angled}
Let $S$ be a set of positive diameter in a normed plane $\B$.
\begin{itemize}
\item If $S$ is not a set of unique completion, or $S$ does not satisfy the
supporting line
property, then for every value of $k$, $a_k(S) = 2k$.
\item If $S$ is a set of unique completion that satisfies the supporting line
property (see page~\pageref{supplineprop}) and the
completion of $S$ is not a Reuleaux polygon, then
for every $k$, $a_k(S) = 2k+1$.
\end{itemize}
\end{thm}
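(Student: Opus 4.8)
The plan is to derive Theorem~\ref{thm:angled} entirely by assembling results already in hand, so the ``proof'' is really a case analysis that routes each hypothesis to the appropriate earlier statement. By Remark~\ref{rem:boundary} we may pass to $\bd S$ whenever convenient, and by Remark~\ref{rem:prelim} the lower bounds $a_k(S)\geq 2k$ (always) and $a_k(S)>2k$ (when $a(S)>2$) are automatic; the content is in producing matching \emph{upper} bounds and in identifying when $a(S)=2$ versus $a(S)=3$.

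For the first bullet I would argue that the stated hypotheses are exactly the two ways Theorem~\ref{thm:normedBorsuk} can fail to force $a(S)\geq 3$: if $S$ is not a set of unique completion, or if $S$ is of unique completion but violates the supporting line property, then (since we are free to discard the parallelogram case, which has a unique completion and trivially satisfies the supporting line property, hence falls under the second bullet or neither) we land in the ``$a(S)=2$'' clause of Theorem~\ref{thm:normedBorsuk}. Once $a(S)=2$, Remark~\ref{rem:prelim} directly gives $a_k(S)=2k$ for every $k$. One subtlety to address explicitly: Theorem~\ref{thm:normedBorsuk} is stated for compact $S$, whereas here $S$ is merely of positive diameter; I would handle this by invoking Remark~\ref{rem:boundary} together with the standard fact that $a(S)$ depends only on the closure (a $2$-piece covering of $\overline S$ by closed sets of smaller diameter restricts to one of $S$), so the reduction to the compact case is harmless.

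For the second bullet, suppose $S$ is of unique completion, satisfies the supporting line property, and its completion $C$ is not a Reuleaux polygon. From Theorem~\ref{thm:normedBorsuk} the supporting line property plus unique completion (and $a(S)\neq 4$, which holds since a homothet of a parallelogram is a Reuleaux polygon with $4$ sides — I would note this to exclude the parallelogram) gives $a(S)=3$, hence $a_k(S)>2k$, i.e. $a_k(S)\geq 2k+1$. For the upper bound I pass to $C$: by Remark~\ref{rem:boundary} and the definition of $a_k$ it suffices to cover $C$, and Theorem~\ref{thm:notReuleaux} asserts precisely $a_k(C)=2k+1$ when $C$ is a complete set of diameter one that is not a Reuleaux polygon. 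The only gap is relating $a_k(S)$ to $a_k(C)$: a $k$-fold Borsuk covering of $C$ restricts to one of $S\subseteq C$ (subsets of diameter $<\operatorname{diam}C=\operatorname{diam}S$ stay of diameter $<\operatorname{diam}S$), so $a_k(S)\leq a_k(C)=2k+1$, and combined with the lower bound we get equality. (One could alternatively cite Corollary~\ref{cor:udmBorsukcovering}, but that is stated for not angled planes, so the direct restriction argument is cleaner here since the present theorem allows angled planes.)

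The main obstacle I anticipate is bookkeeping around the parallelogram/Reuleaux overlap and the compactness hypothesis: I must make sure that ``$S$ is a set of unique completion satisfying the supporting line property with non-Reuleaux completion'' genuinely implies $a(S)=3$ and not $a(S)=4$, which requires observing that an $m$-sided Reuleaux polygon in the sense of Definition~\ref{defn:Reuleaux} is a finite intersection of translates of $\B$ while a parallelogram homothetic to a parallelogram unit ball is such an intersection (of four translates), so the non-Reuleaux hypothesis rules out the $a(S)=4$ case automatically. Everything else is a direct citation of Theorem~\ref{thm:normedBorsuk}, Theorem~\ref{thm:notReuleaux}, and Remark~\ref{rem:prelim}, together with the elementary restriction inequalities $a_k(S)\leq a_k(\overline S)$ and $a_k(S)\leq a_k(C)$ for $S\subseteq C$ of equal diameter.
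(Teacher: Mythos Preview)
Your proposal is correct and takes essentially the same approach as the paper, which gives no detailed proof at all but simply declares that the theorem follows ``immediately'' from Theorems~\ref{thm:normedBorsuk} and~\ref{thm:notReuleaux} together with Remark~\ref{rem:prelim}. Your expanded argument supplies exactly the routing those citations imply, and additionally handles two technicalities the paper glosses over (the compactness hypothesis in Theorem~\ref{thm:normedBorsuk}, and the exclusion of the $a(S)=4$ parallelogram case via the observation that a parallelogram is itself a Reuleaux polygon); the only slip is that a parallelogram is a $2$-sided, not $4$-sided, Reuleaux polygon in its own norm (cf.\ the remark after Theorem~\ref{thm:angled}), but this does not affect your logic.
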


\section{Remarks and Questions}

\begin{rem}
We note that our results cannot be extended to angled planes. For example,
Theorem~\ref{thm:completionNotAngled} fails if the unit disk $\B$ is a parallelogram. Besides, any
centrally symmetric polygon with $4m$ sides is a Reuleaux polygon with $2m$
sides
in its norm (and thus, it has even sides according to our definition).
\end{rem}

\begin{rem}
The $k$-fold Borsuk number of an $o$-symmetric polygon $P$ with $2m$ sides in its own norm is
$a_k(P)=2k+\left\lceil \frac{2k}{m-1} \right\rceil$.
\end{rem}

\begin{proof}
Let the vertices of the polygon be $p_1,p_2,\ldots,p_{2m}$ in counterclockwise order. Then $p_i$ is diametrically opposite
to $p_{i+k-1}, p_{i+k}$ and $p_{i+k+1}$. Thus, the inequality $a_k(P) \geq 2k+\left\lceil \frac{2k}{m-1} \right\rceil$ follows
from the Pigeonhole Principle. On the other hand, if $G_i$ denotes the shorter arc in $\bd P$, connecting the midpoints of $[p_i,p_{i+1}]$
and $[p_{a+k-1},p_{i+k}]$, then, clearly, $G_i$ contains the vertices of no diameter of $P$.
Thus, the arcs $G_{i+t(k-1)}$, where $t=1,2,\ldots,2k+\left\lceil \frac{2k}{m-1} \right\rceil$, form a $k$-fold Borsuk-covering of $\bd P$.
\end{proof}

\begin{rem}
It is proven in \cite{BS77} that in any angled normed plane there is a complete
set of Borsuk number two. In other words, for a normed plane, the result in
\cite{B70} about the Borsuk numbers of sets in the Euclidean plane
holds in the same form if, and only if the plane is not angled. According to our results, the same can be observed
about the multiple Borsuk numbers of sets.
\end{rem}

\begin{rem}
In any angled normed plane, there is a Borsuk covering of a set of unique
completion, satisfying the supporting line property (see page~\pageref{supplineprop}),
that cannot be completed to a Borsuk covering of its completion.
\end{rem}

\begin{proof}
If the norm is a parallelogram norm, the remark trivially follows.
Hence, we may assume that the unit disk $\B$ is not a parallelogram, and that its boundary contains $[x,y] \cup [y,z]$ and $[-x,-y] \cup [-y,-z]$.
Without loss of generality, we may assume that the lines, containing $[x,y]$ and $[y,z]$,
intersect $\B$ in $[x,y]$ and $[y,z]$, respectively.

Let $C$ be the truncation of $\B$ with a line connecting the relative interior points $w_1$ and $w_2$ of $[x,y]$ and $[y,z]$, respectively.
Clearly, the unique completion of $C$ is $\B$, and $C$ satisfies the supporting line property.
Let $w$ be the midpoint of $[w_1,w_2]$.
Let $u_1$ and $u_2$ be relative interior points of $[-x,-y]$ and $[-y,-z]$, respectively (cf. Figure~\ref{fig:example}).
Then the shorter arcs of $\bd C$ connecting $w$ to $u_1$, $u_1$ to $u_2$, and $u_2$ to $w$, is a Borsuk covering of $\bd C$.
On the other hand, $y$ cannot be added to any of these arcs, which yields that this covering cannot be extended to any
Borsuk covering of $\B$.
\end{proof}

\begin{figure}[ht]
\begin{center}
\includegraphics[width=0.5\textwidth]{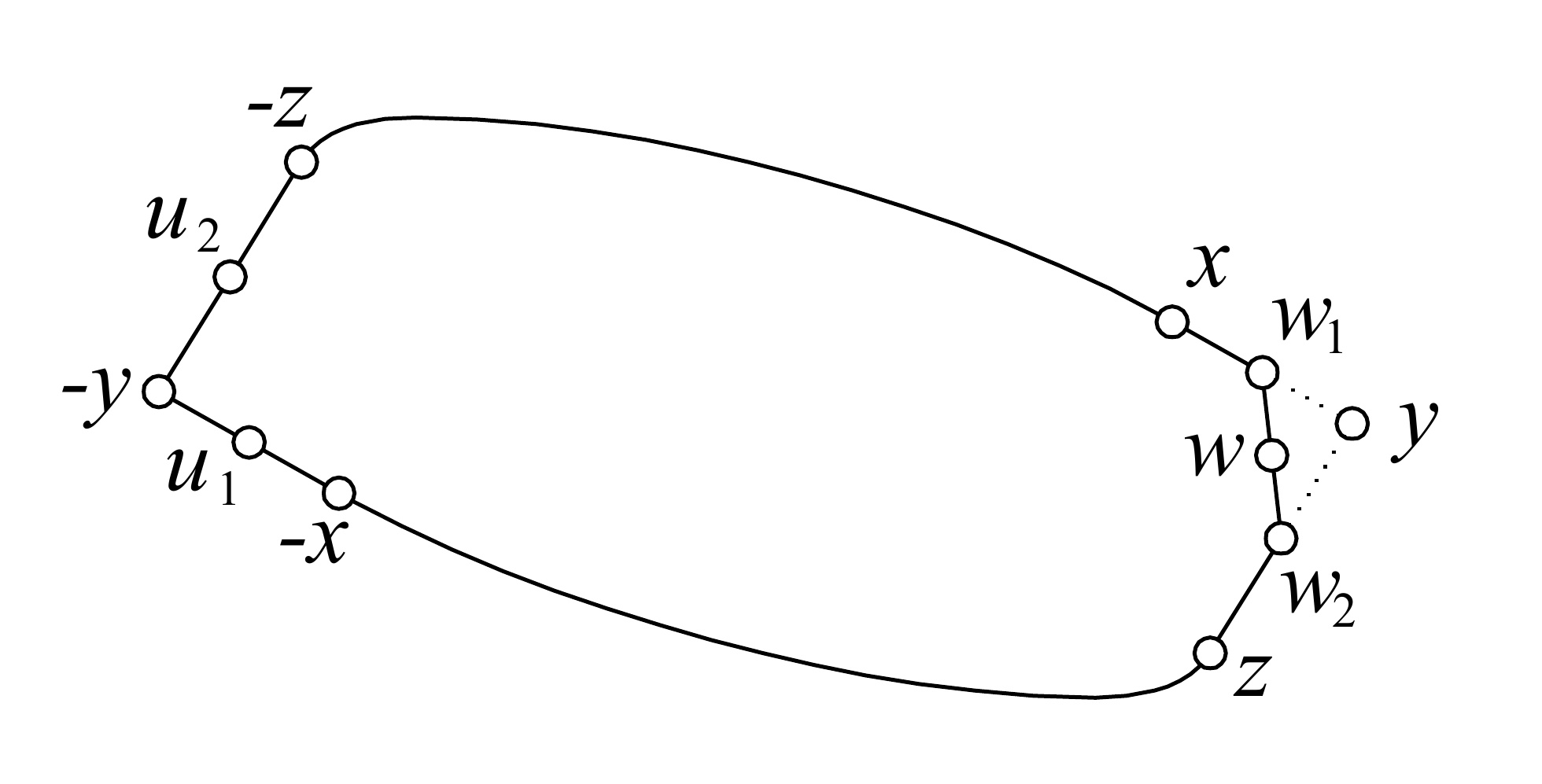}
\caption[]{A Borsuk covering may not be extended in an angled plane}
\label{fig:example}
\end{center}
\end{figure}

Note that if $\B$ is a parallelogram, then the only complete sets of unit
diameter are the translates of $\B$ (cf. \cite{Yo91}, \cite{NV04} or \cite{JL07}).

\begin{rem}
Let $S$ be a compact set with $a(S)=3$ in the normed plane where
$\B$ is a parallelogram. Then $a_k(S)= 3k$ for every $k$.
\end{rem}

\begin{proof}
Without loss of generality, let $\B$ be the unique completion of $S$.
By the supporting line property, $S$ contains at least two consecutive vertices of
$\B$.
Furthermore, since $\B$ is the unique completion, $S$ contains a point of the
opposite side of $\B$.
Thus, $S$ contains three points at pairwise normed distances equal to $\diam S$,
which yields
$a_k(S) \geq 3k$. By sub-additivity, we have $a_k(S) \leq 3k$, and the assertion
readily follows.
\end{proof}


\end{document}